\newtheorem{theorem}{Theorem}[section]
\newtheorem{cor}[theorem]{Corollary}
\newtheorem{prop}[theorem]{Proposition}
\theoremstyle{definition}
\newtheorem{definition}[theorem]{Definition}
\newtheorem{example}[theorem]{Example}
\theoremstyle{remark}
\newtheorem{remark}[theorem]{Remark}
\numberwithin{equation}{section}
\newcommand{\LL}{{\mathcal L}}
\newcommand{\OO}{{\mathcal O}}
\newcommand{\HC}{{\mathcal H}}
\newcommand{\VV}{{\mathcal V}}
\newcommand{\FC}{{\mathcal F}}
\newcommand{\Z}{\mathbb{Z}}
\newcommand{\Q}{\mathbb{Q}}
\newcommand{\R}{\mathbb{R}}
\newcommand{\C}{\mathbb{C}}
\newcommand{\VB}{\mathbb{V}}
\newcommand{\mh}{\mbox{MHM}}
\newcommand{\po}{\mbox{MH}}
\begin{document}

\title[Hodge genera and characteristic classes]{Hodge-theoretic Atiyah-Meyer formulae \\ and the
stratified multiplicative property}

%    Information for first author

\author{Laurentiu Maxim}
\address{Institute of Mathematics of the Romanian Academy, PO-BOX 1-764, 70700 Bucharest, Romania.}
\curraddr{Department of Mathematics and Computer Science\\ CUNY-Lehman College\\
250 Bedford Park Blvd West\\ Bronx, New York 10468.}
\email{laurentiu.maxim@lehman.cuny.edu}

\author{J\"org Sch\"urmann}
\address{Mathematische Institut, Universit\"at M\"unster, Einsteinstr. 62, 48149 M\"unster, Germany.}
\email{jschuerm@math.uni-muenster.de}

%    General info
\subjclass[2000]{Primary 57R20, 14D07, 14C30, 32S35; Secondary 57R45, 32S60, 55N33, 13D15.}
%\date{\today}
%\copyrightinfo{2008}{American Mathematical Society}

\dedicatory{Dedicated to L\^e D\~ung Tr\'ang on His $60$th Birthday}

\keywords{stratified multiplicative property, intersection homology, genera, characteristic classes, monodromy, variation of mixed Hodge structures.}

\begin{abstract}
In this note we survey Hodge-theoretic formulae of Atiyah-Meyer type
for genera and characteristic classes of complex algebraic
varieties, and derive some new and interesting applications. We also
present various extensions to the singular setting of the
Chern-Hirzebruch-Serre signature formula.
\end{abstract}

\maketitle

\section{Introduction}
In the mid $1950$'s, Chern, Hirzebruch and Serre \cite{CHS} showed
that if $F \hookrightarrow E \overset{\pi}{\rightarrow} B$ is a
fiber bundle of closed, coherently oriented, topological manifolds
such that the fundamental group of the base $B$ acts trivially on
the cohomology of the fiber $F$, then the signatures of the spaces
involved are related by a simple multiplicative relation:
\begin{equation}\label{CHS}\sigma(E)=\sigma(F) \cdot
\sigma(B).\end{equation}

A decade later, Kodaira \cite{Ko}, Atiyah \cite{At}, and
respectively Hirzebruch \cite{H2} observed that without the
assumption on the (monodromy) action of $\pi_1(B)$ the
multiplicativity relation fails. In the case when $\pi$ is a
differentiable fiber bundle of compact oriented manifolds so that
both $B$ and $F$ are even-dimensional, Atiyah obtained a formula for
$\sigma(E)$ involving a contribution from the monodromy action. 
Let $k=\frac{1}{2}{\rm dim_{\R}}F$. Then the flat bundle
$\VV$ over $B$ with fibers $H^k(F_x;\R)$ $(x \in B)$ has a
$K$-theory signature, $[\VV]_K \in KO(B)$ for $k$ even (resp. in
$KU(B)$ for $k$ odd), and the Atiyah signature theorem \cite{At}
asserts that
\begin{equation}\label{A} \sigma(E)=\langle ch^*_{(2)}([\VV]_K) \cup L^*(B), [B]
\rangle, \end{equation} where $ch^*_{(2)}$ is a modified Chern
character (obtained by precomposing with the second Adams operation),
and $L^*(B)$ is the total Hirzebruch $L$-polynomial of $B$.\\

Meyer \cite{Me} extended Atiyah's formula to the case of
\emph{twisted signatures} of closed manifolds endowed with
Poincar\'e local systems (that is, local systems with duality) not
necessarily arising from a fibre bundle projection. If $B$ is a
closed, oriented, smooth manifold of even dimension, and $\LL$ is a
local system equipped with a nondegenerate (anti-)symmetric bilinear
pairing $\LL \otimes \LL \to \R_B$, then the twisted signature
$\sigma(B;\LL)$ is defined to be the signature of the nondegenerate
form on the sheaf cohomology group $H^{dim(B)/2}(B;\LL)$, and can be
computed by Meyer's signature formula:
\begin{equation}\label{M} \sigma(B;\LL)=\langle ch^*_{(2)}([\LL]_K) \cup L^*(B), [B]
\rangle, \end{equation} where $[\LL]_K$ is the $K$-theory signature
of $\LL$ defined as follows. For $k$ even (resp. for $k$ odd) the nondegenerate
pairing induces a splitting of the associated flat bundle 
$\LL= \LL^+  \oplus \LL^- $ into a positive and negative definite part
(resp. induces a complex structure on the associated flat bundle 
$\LL$ with $\LL^*$ the complex conjugate bundle). Then
$$ [\LL]_K:= \begin{cases} \LL^+  - \LL^- \in  KO(B) & \text{if $k$ is even,}\\
\LL^*  - \LL \in  KU(B) & \text{if $k$ is odd.}\end{cases}$$

Geometric mapping situations that involve singular spaces generally
lead to Poincar\'e local systems that are only defined on the top
stratum of a stratified space. For example, Cappell and Shaneson
\cite{CS2} proved that if $f:Y \to X$ is a stratified map of even
relative dimension between oriented, compact, Whitney stratified
spaces with only strata of even codimension, then:
\begin{equation}\label{CS}
\sigma(Y)=\sigma(X;{\LL}^f_{X - \Sigma}) + \sum_{\text{pure strata }
Z \subset X} \sigma (\bar{Z}; \LL^f_Z),\end{equation} where $\Sigma
\subset X$ is the singular set of $f$ and, for an open stratum $Z$
in $X$, $\LL^f_Z$ is a certain Poincar\'e local system defined on
it. In particular, if all strata $Z \subset X$ of $f$ are
simply-connected then, as an extension of the Chern-Hirzebruch-Serre
formula (\ref{CHS}) to the stratified case, we obtain from
(\ref{CS}) that
\begin{equation}\label{CSs}
\sigma(Y)=\sum_{\text{ strata } Z \subset X} \sigma (\bar{Z}) \cdot
\sigma(N_Z),
\end{equation}
where for a pure stratum $Z$ of real codimension at least two and
with link $L_Z$ in $X$,
$$N_Z:=f^{-1}({\rm cone } \ L_{Z}) \cup_{f^{-1}(L_{Z})} {\rm
cone } \ f^{-1}(L_{Z}) $$ is the topological completion of the
preimage under $f$ of the normal slice to $Z$ in $X$; if $Z$ is a
component of the top stratum $X \setminus \Sigma$, then $N_Z$ is the
fiber of $f$ over $Z$.\\

 More generally, similar formulae hold for the
push-forward of the Goresky-MacPherson $L$-classes $L_k(Y) \in
H_k(Y;\mathbb{Q})$, $k \geq 0$. On a space $X$ with only
even-codimensional strata and singular set $\Sigma$, the twisted
homology $L$-classes $L_k(X;\LL)$ and the twisted signature
$\sigma(X;\LL)$ for a Poincar\'e local system $\LL$ on $X-\Sigma$
can be defined by noting that the duality of the local system
extends to a self-duality of the corresponding middle-perversity
intersection chain sheaf $IC_X(\LL)$ on $X$ (for complete details on
the construction, the reader is advised to consult the book
\cite{Ba} and the references therein).\\

It is therefore natural at this point to ask for extensions of
Meyer's signature formula to the singular setting. In \cite{BCS},
Banagl, Cappell and Shaneson proved the following.
Suppose $X$ is a closed oriented Whitney stratified normal Witt
space (that is, a space on which the middle-perversity intersection
chain sheaf $IC_X$ is self-dual, cf. \cite{Si}) of even dimension
with singular set $\Sigma$, and let $\LL$ be a Poincar\'e local
system defined on $X-\Sigma$ such that $\LL$ is \emph{strongly
transverse to $\Sigma$}. On normal spaces, this technical assumption
is equivalent to saying that $\LL$ has a unique extension as a
Poincar\'e local system to all of $X$. Such a local system possesses
a $K$-theory signature $[\LL]_K$ in the $K$-theory of $X$ (cf.
\cite{BCS}, Corollary $2$), and $IC_X(\LL)$ is again self-dual. Then
the twisted $L$-classes are well-defined, and they can be computed by the
formula (cf. \cite{BCS}, Theorems $1$ and $3$)
\begin{equation}\label{BCS}
L_*(X;\LL)=ch^*_{(2)}\left([\LL]_K\right) \cap L_*(X)
\end{equation} (here $L_*$ stands for the total homology $L$-classes respectively);
in particular, the twisted signature is given by
\begin{equation}\label{BCS2}
\sigma(X;\LL)=\langle ch^*_{(2)}\left([\LL]_K\right), L_*(X)
\rangle.
\end{equation}

\bigskip

In this note, we survey Hodge theoretic Atiyah-Meyer type formulae
for genera and characteristic classes of complex algebraic
varieties. In fact, these are Hodge theoretic analogues of the above
formulae (see \cite{CLMS,CLMS2}), and various extensions to the
singular setting (see \cite{MS}). We also present the main ideas and
constructions that lead to the stratified multiplicative property
for Hodge genera and the Hirzebruch characteristic classes of
complex algebraic varieties; for more details on part of this work,
see \cite{CMS0,CMS}. Some of the results in this note were announced
in the present form in the paper \cite{CLMS2}.

The first author is grateful to
his mentors and collaborators Sylvain Cappell, Anatoly Libgober and Julius
Shaneson for their contribution to the work summarized in this
report, and for constant guidance and support.

\section{Hirzebruch characteristic classes.}
In this section we first define the Hirzebruch class of a smooth
complex projective algebraic variety, then, following \cite{BSY,SY},
we describe its recent generalization to the singular setting. The
construction in the singular case yields characteristic classes in
(Borel-Moore) homology, and makes use of Saito's theory of algebraic
mixed Hodge modules. In this section, we only survey formal
properties of this deep theory which will be needed in the sequel.

\subsection{The non-singular case.}\label{Hir} If $Z$ is a smooth
projective complex algebraic variety, the signature and the
$L$-classes of $Z$ are special cases of more general Hodge theoretic
invariants encoded by the Hirzebruch characteristic class (also
called ``the generalized Todd class") $T_y^*(T_Z)$ of the tangent
bundle of $Z$ (cf. \cite{H}). This is defined by the normalized
power series
\begin{equation}
Q_y(\alpha)=\frac{\alpha(1+y)}{1-e^{-\alpha(1+y)}}-\alpha y \in
\Q[y][[\alpha]],
\end{equation}
that is, \begin{equation}\label{Hsm}
T_y^*(T_Z)=\prod_{i=1}^{dim(Z)}Q_y(\alpha_i),
\end{equation}
where $\{\alpha_i\}$ are the Chern roots of the tangent bundle
$T_Z$. Note that $Q_{y}(\alpha)$ is equal to $1+\alpha$ for $y=-1$,
to $\frac{\alpha}{1-e^{-\alpha}}$ for $y=0$, and it equals
$\frac{\alpha}{\tanh \alpha}$ if $y=1$. Therefore, the Hirzebruch
class $T_y^*(T_Z)$ coincides with the total Chern class $c^*(T_Z)$
if $y=-1$, with the total Todd class $td^*(T_Z)$ if $y=0$, and 
with the total Thom-Hirzebruch $L$-class $L^*(T_Z)$ if $y=1$.\\

The Hirzebruch class appears in the \emph{generalized
Hirzebruch-Riemann-Roch theorem} (cf. \cite{H}, \S 21.3), which
asserts that if $\Xi$ is a holomorphic vector bundle on a smooth
complex projective variety $Z$, then the $\chi_y$-characteristic of
$\Xi$, which is defined by
\begin{equation}\label{Hichi}\chi_y(Z,\Xi):=\sum_{p \geq 0} \chi(Z,
\Xi \otimes \Lambda^pT^*_Z) \cdot y^p=\sum_{p \geq 0} \left( \sum_{i
\geq 0} (-1)^i \text{dim} H^i(Z,\Omega(\Xi) \otimes \Lambda^pT^*_Z)
\right) \cdot y^p,\end{equation} with $T^*_Z$ the holomorphic
cotangent bundle of $Z$ and $\Omega(\Xi)$ the coherent sheaf of
germs of sections of $\Xi$, can in fact be expressed in terms of the
Chern classes of $\Xi$ and the tangent bundle of $Z$, or more
precisely,
\begin{equation}\label{gHRR}
\chi_y(Z,\Xi)=\langle ch_{(1+y)}^*(\Xi) \cup
{T}_y^*(T_Z),[Z]\rangle,
\end{equation}
where $ch^*_{(1+y)}(\Xi)=\sum_{j=1}^{{\mbox{rk}} (\Xi)} e^{\beta_j (1+y)},$ for
$\{\beta_j\}_j$ the Chern roots of $\Xi$. In particular, if
$\Xi=\mathcal{O}_Z$, the Hirzebruch genus
$\chi_y(Z):=\chi_y(Z,\mathcal{O}_Z)$ can be computed by
\begin{equation}\label{Hchi}\chi_y(Z)=\langle {T}_y^*(T_Z) , [Z] \rangle.\end{equation}

\subsection{Mixed Hodge modules.} Before discussing extensions of
the Hirzebruch class to the singular setting, we need to briefly
recall some aspects of Saito's theory of algebraic mixed Hodge
modules. Generic references for this theory are Saito's papers
\cite{Sa0,Sa2,Sa1}.\\

To each complex algebraic variety $Z$, M. Saito associated an abelian
category $\mh(Z)$ of algebraic mixed Hodge modules on $Z$ (cf.
\cite{Sa0,Sa1}). If $Z$ is smooth, an object of this category
consists of a bifiltered regular holonomic $D$-module $(M,W,F)$
together with a filtered perverse sheaf $(K,W)$ that corresponds,
after tensoring with $\C$, to $(M,W)$ under the Riemann-Hilbert
correspondence. In general, for a singular variety $Z$ one works
with suitable local embeddings into manifolds and corresponding
filtered $D$-modules supported on $Z$. In addition, these objects
are required to satisfy a long list of complicated properties.\\

The forgetful functor from $\mh(Z)$ to the category of perverse
sheaves extends to a functor $rat:D^b\mh(Z) \to D^b_c(Z)$ to the
derived category of complexes of $\Q$-sheaves with constructible
cohomology. The usual truncation $\tau_{\leq}$ on $D^b\mh(Z)$
corresponds to the perverse truncation ${^p\tau}_{\leq}$ on
$D^b_c(Z)$. Saito also constructed a $t$-structure $\tau'_{\leq}$ on
$D^b\mh(Z)$ which is compatible with the usual $t$-structure on
$D^b_c(Z)$ (\cite{Sa1}, Remark 4.6(2)). There are functors $f_*$,
$f_!$, $f^*$, $f^!$, $\otimes$, $\boxtimes$ on $D^b\mh(Z)$ which are
``lifts" via $rat$ of the similar functors defined on $D^b_c(Z)$. If
$f$ is a proper algebraic morphism then $f_*=f_!$.\\

It follows from the definition that every $M \in \mh(Z)$ has an
increasing weight filtration $W$ so that the functor $M \to Gr^W_kM$
is exact. We say that $M \in \mh(Z)$ is pure of weight $k$ if
$Gr_i^WM=0$ for all $i \neq k$. The weight filtration is extended to
the derived category $D^b\mh(Z)$ by requiring that a shift $M
\mapsto M[1]$ increases the weights by one. So $M \in D^b\mh(Z)$ is
pure of weight $k$ if $H^i(M)$ is pure of weight $i+k$ for all $i
\in \Z$. If $f$ is a map of algebraic varieties, then $f_!$ and
$f^*$ preserve weight $\leq k$, and $f_*$ and $f^!$ preserve weight
$\geq k$. In particular, if $M \in D^b\mh(X)$ is pure and $f:X \to
Y$ is proper, then $f_*M \in D^b\mh(Y)$ is pure of the same weight
as $M$.\\

We say that $M \in \mh(Z)$ is supported on $S$ if and only if
$rat(M)$ is supported on $S$.
There are abelian subcategories $\po(Z,k)^p \subset \mh(Z)$ of pure
polarizable Hodge modules of weight $k$. For each $k \in \Z$, the
abelian category $\po(Z,k)^p$ is semi-simple, in the sense that every
polarizable Hodge module on $Z$ can be uniquely written as a direct
sum of polarizable Hodge modules with strict support in irreducible
closed subvarieties of $Z$. Let $\po_S(Z,k)^p$ denote the subcategory
of polarizable Hodge modules of weight $k$ with strict support in
$S$. Then every $M \in \po_S(Z,k)^p$ is generically a polarizable
variation of Hodge structures $\VB_U$ on a Zariski dense open subset
$U \subset S$, with quasi-unipotent monodromy at infinity.
Conversely, every such polarizable variation of Hodge structures can
be extended in an unique way to a pure Hodge module. Under this
correspondence, for $M \in \po_S(Z,k)^p$ we have that
$rat(M)=IC_S(\VB)$, for $\VB$ the corresponding variation of Hodge
structures.\\

Saito showed that the category of mixed Hodge modules supported on a
point, $\mh(pt)$, coincides with the category $mHs^p$ of (graded)
polarizable rational mixed Hodge structures. Here one has to switch
the increasing $D$-module filtration $F_*$ of the mixed Hodge module
to the decreasing Hodge filtration of the mixed Hodge structure by
$F^{*}:=F_{-*}$, so that $gr^p_F \simeq  gr^F_{-p}$. In this case,
the functor $rat$ associates to a mixed Hodge structure the
underlying rational vector space. There exists a unique object $\Q^H
\in \mh(pt)$ such that $rat(\Q^H)=\Q$ and $\Q^H$ is of type $(0,0)$.
In fact, $\Q^H=((\C,F), \Q, W)$, with $gr^F_i=0=gr^W_i$ for all $i
\neq 0$. For a complex variety $Z$, define $\Q_Z^H:=a_Z^*\Q^H \in
D^b\mh(Z)$, with $rat(\Q_Z^H)=\Q_Z$, for $a_Z:Z\to pt$ the map to a
point. If $Z$ is \emph{smooth} of complex dimension $n$ then
$\Q_Z[n]$ is perverse on $Z$, and $\Q_Z^H[n]\in \mh(Z)$ is a single
mixed Hodge module (in degree $0$), explicitly described by
$\Q_Z^H[n]=((\OO_Z, F), \Q_Z[n], W),$ where $F$ and $W$ are trivial
filtrations so that $gr^F_i=0=gr^W_{i+n}$ for all $ i \neq 0$. So if
$Z$ is smooth of dimension $n$, then $\Q_Z^H[n]$ is a pure mixed
Hodge module of weight $n$. Next, note that if $j: U \hookrightarrow
Z$ is a Zariski-open dense subset in $Z$, then the intermediate
extension $j_{!*}$ (cf. \cite{BBD}) preserves the weights. This
shows that if $Z$ is a complex algebraic variety of pure dimension
$n$ and $j: U \hookrightarrow Z$ is the inclusion of a smooth
Zariski-open dense subset then the intersection cohomology module
$IC_Z^H:=j_{!*}(\Q_U^H[n])$ is pure of weight $n$, with underlying
perverse sheaf $rat(IC_Z^H)=IC_Z$.\\

If $Z$ is smooth of dimension $n$, an object $M \in \mh(Z)$ is
called \emph{smooth} if and only if $rat(M)[-n]$ is a local system
on $Z$. Smooth mixed Hodge modules are (up to a shift) admissible
(at infinity) variations of mixed Hodge structures (in the sense of
Steenbrink-Zucker \cite{SZ} and Kashiwara \cite{K}). Conversely, an
admissible variation of mixed Hodge structures $\LL$ (e.g., a
geometric variation, or a pure polarizable variation) on a smooth
variety $Z$ of pure dimension $n$ gives rise to a smooth mixed Hodge
module (cf. \cite{Sa1}), i.e., to an element $\LL^H[n]\in \mh(Z)$
with $rat(\LL^H[n])=\LL[n]$. A pure polarizable variation of weight
$k$ yields an element of $\po(Z,k+n)^p$. By the stability by the
intermediate extension functor it follows that if $Z$ is an
algebraic variety of pure dimension $n$ and $\LL$ is an admissible
variation of (pure) Hodge structures (of weight $k$) on a smooth
Zariski-open dense subset $U \subset Z$, then $IC^H_Z(\LL)$ is an
algebraic mixed Hodge module (pure of weight $k+n$), so that
$rat(IC_Z^H(\LL)|_U)=\LL[n]$.

\subsection{Grothendieck
groups of algebraic mixed Hodge modules.}\label{Grot} In this
section, we describe the functorial calculus of Grothendieck groups
of algebraic mixed Hodge modules. Let $Z$ be a complex algebraic
variety. By associating to (the class of) a complex the alternating
sum of (the classes of) its cohomology objects, we obtain the
following identification (e.g. compare [\cite{KS}, p. 77],
[\cite{Sc}, Lemma 3.3.1])
\begin{equation} K_0(D^b\mh(Z))=K_0(\mh(Z)).
\end{equation}
In particular, if $Z$ is a point, then
\begin{equation} K_0(D^b\mh(pt))=K_0(mHs^p),
\end{equation}
and the latter is a commutative ring with respect to the tensor
product, with unit $[\Q^H_{pt}]$. Let $\tau_{\leq}$ be the natural
truncation on $D^b\mh(Z)$ with associated cohomology $H^{*}$.  Then
for any complex $M^{\bullet} \in D^b\mh(Z)$ we have the
identification
\begin{equation}\label{i1}
[M^{\bullet}]=\sum_{i \in \Z} (-1)^i [H^i(M^{\bullet})] \in
K_0(D^b\mh(Z)) \cong K_0(\mh(Z)).
\end{equation}
In particular, if for any $M \in \mh(Z)$ and $k \in \Z$ we regard
$M[-k]$ as a complex concentrated in degree $k$, then
\begin{equation}\label{i2}
\left[ M[-k] \right]= (-1)^k [M] \in K_0(\mh(Z)).
\end{equation}
All functors $f_*$, $f_!$, $f^*$, $f^!$, $\otimes$, $\boxtimes$
induce corresponding functors on $K_0(\mh(\cdot))$. Moreover,
$K_0(\mh(Z))$ becomes a $K_0(\mh(pt))$-module, with the
multiplication induced by the exact exterior product
$$\boxtimes : \mh(Z) \times \mh(pt) \to \mh(Z \times \{pt\}) \simeq
\mh(Z).$$ Also note that $$M \otimes \Q^H_Z \simeq M \boxtimes
\Q^H_{pt} \simeq M$$ for all $M \in \mh(Z)$. Therefore,
$K_0(\mh(Z))$ is a unitary $K_0(\mh(pt))$-module. The functors
$f_*$, $f_!$, $f^*$, $f^!$ commute with exterior products (and $f^*$
also commutes with the tensor product $\otimes$), so that the
induced maps at the level of Grothendieck groups $K_0(\mh(\cdot))$
are $K_0(\mh(pt))$-linear. Moreover, by the functor
$$rat:K_0(\mh(Z)) \to K_0(D^b_c(Z)) \simeq K_0(Perv(\Q_Z)),$$ these  transformations lift the corresponding ones from the (topological) level
of Grothendieck groups of constructible (or perverse) sheaves.

\subsection{Hirzebruch classes in the singular
setting.}\label{singular} For any complex variety $Z$, and for any
$p \in \Z$, Saito constructed a functor of triangulated categories
\begin{equation} gr^F_pDR: D^b\mh(Z) \to D^b_{coh}(Z)\end{equation}
commuting with proper push-down,
with $gr^F_pDR(M)=0$ for almost all $p$ and $M$ fixed,
where $D^b_{coh}(Z)$ is the bounded
derived category of sheaves of $\mathcal{O}_Z$-modules with coherent
cohomology sheaves.
If $\Q_Z^H \in D^b\mh(Z)$ denotes the constant
Hodge module on $Z$, and if $Z$ is smooth and pure dimensional, then
$gr^F_{-p} DR(\Q_Z^H) \simeq \Omega^p_Z[-p]$. The transformations
$gr^F_pDR$ induce functors on the level of Grothendieck groups.
Therefore, if $G_0(Z) \simeq K_0(D^b_{coh}(Z))$ denotes the
Grothendieck group of coherent sheaves on $Z$, we get a group
homomorphism (the {\em motivic Chern class transformation})
\begin{equation}\label{grF}
MHC_*: K_0(\mh(Z)) \to G_0(Z) \otimes \Z[y, y^{-1}]\;;
\end{equation}
$$[M] \mapsto \sum_{i,p} (-1)^{i} [\HC^i ( gr^F_{-p} DR(M) )] \cdot
(-y)^p\:.$$
We let $td_{(1+y)}$ be the natural transformation (cf. \cite{Y,BSY})
\begin{equation}\label{td}
td_{(1+y)}:G_0(Z) \otimes \Z[y, y^{-1}] \to H^{BM}_{2*}(Z) \otimes
\Q[y, y^{-1}, (1+y)^{-1}]\;;
\end{equation}
$$[\mathcal{F}] \mapsto \sum_{k \geq
0} td_k([\mathcal{F}]) \cdot (1+y)^{-k}\:,$$
where $H_*^{BM}$ stands for Borel-Moore homology, and $td_k$ is the
degree $k$ component (i.e., in $H_{2k}^{BM}(Z)$) of the Todd class
transformation $td_*:G_0(Z) \to H_{2*}^{BM}(Z) \otimes \Q$ of
Baum-Fulton-MacPherson \cite{BFM}, which is linearly extended over
$\Z[y, y^{-1}]$.

\begin{definition}\label{d1} The motivic Hirzebruch class
transformation $MHT_y$ is defined by the composition (cf.
\cite{BSY})
\begin{equation}\label{IT}
MHT_y :=td_{(1+y)} \circ MHC_*: K_0(\mh(Z)) \to H^{BM}_{2*}(Z)
\otimes \Q[y,y^{-1},(1+y)^{-1}]\:.
\end{equation}
The motivic Hirzebruch class ${T_y}_*(Z)$ of a complex algebraic
variety $Z$ is defined by \begin{equation}
{T_y}_*(Z):=MHT_y([\Q_Z^H]).
\end{equation}
Similarly, if $Z$ is an $n$-dimensional complex algebraic manifold,
and $\LL$ is a local system on $Z$ underlying an admissible
variation of mixed Hodge structures, we define twisted Hirzebruch
characteristic classes by
\begin{equation}\label{tHc} {T_y}_*(Z; \LL)=MHT_y([\LL^H]),\end{equation} where
$\LL^H[n]$ is the smooth mixed Hodge module on $Z$ with underlying
perverse sheaf $\LL[n]$.
\end{definition}

\begin{example}\label{pt} Let $\VB=((V_{\C},F), V_{\Q},K) \in \mh(pt)=mHs^p$. Then:
\begin{equation}\label{point}
MHT_y([\VB])=\sum_p td_0([gr^p_F V_{\C}]) \cdot (-y)^p = \sum_p
\text{dim}_{\C} (gr^p_F V_{\C}) \cdot (-y)^p = \chi_y([\VB]),
\end{equation}
so over a point the transformation $MHT_y$ coincides with the
$\chi_y$-genus ring homomorphism $\chi_y:K_0(mHs^p) \to
\Z[y,y^{-1}]$.
\end{example}

By definition, the transformations $MHC_*$ and $MHT_y$   commute with proper
push-forward. The following {\it normalization} property holds (cf.
\cite{BSY}): If $Z$ is smooth and pure dimensional, then
$${T_y}_*(Z)=T_y^*(T_Z) \cap [Z]\:,$$
where $T_y^*(T_Z)$ is the
cohomology Hirzebruch class of $Z$ defined in \S\ref{Hir}. So, if
$Z$ is smooth and projective, then $T_1^*(T_Z)$ is the total
Hirzebruch $L$-polynomial of $Z$ and $\chi_1(Z)=\sigma(Z)$.

For a complete (possibly singular) variety $Z$ with $k:Z \to pt$ the
constant map to a point, the pushdown $k_*{T_y}_*(Z)$ is the Hodge
genus
\begin{equation}\label{defchi}\chi_y(Z):=\chi_y([H^*(Z;\Q)])=\sum_{i,p}
(-1)^i dim_{\C} (gr^p_F H^i(Z;\C)) \cdot (-y)^p,
\end{equation}
with $\chi_{-1}(Z):=\chi([H^*(Z;\Q)])$ the topological Euler characteristic of $Z$.
For $Z$ smooth $k_*{T_y}_*(Z;\LL)$ is the twisted $\chi_y$-genus
$\chi_y(Z;\LL)$ defined in a similar manner (\cite{CLMS})
\footnote{Note that by Deligne's theory, if $Z$ is smooth and
projective then $\chi_y(Z)$ defined in (\ref{defchi}) yields the
same invariant as $\chi_y(Z;\mathcal{O}_Z)$ defined by the equation
(\ref{Hichi}).}.

It was shown in \cite{BSY} that for any variety $Z$ the limits $T_{y*}(Z)$
for $y=-1,0$ exist, with
$${T_{-1}}_*(Z)=c_*(Z) \otimes \Q$$
the total (rational) Chern
class of MacPherson (for a construction of the latter see \cite{M}).
Moreover, for a variety $Z$ with at most Du Bois singularities
(e.g., toric varieties), we have that
$${T_0}_*(Z)=td_*(Z):=td_*([\mathcal{O}_Z]) \:,$$
for  $td_*$ the
Baum-Fulton-MacPherson transformation \cite{BFM}. It is still a
conjecture that for a rational homology manifold ${T_1}_*(Z)$
coincides with the total Goresky-MacPherson homology $L$-class of
$Z$ (see \cite{BSY}, p.4 and Remark 5.4). As will be shown elsewhere, this conjecture is true at least for
$Z=M/G$ the quotient of a complex projective manifold $M$ by the algebraic action
of a finite group.\\

The Hirzebruch class of Section \ref{Hir} also admits another
extension to the singular setting, which is defined by means of
intersection homology. Let $IC^H_Z \in \mh(Z)$ be the intersection
homology (pure) Hodge module on a pure-dimensional variety $Z$, so
$rat(IC^H_Z)=IC_Z$. Similarly, for an admissible variation of mixed
Hodge structures $\LL$ defined on a smooth Zariski dense open subset
of $Z$, let $IC_Z^H(\LL)$ be the corresponding mixed Hodge module
with underlying perverse sheaf $IC_Z(\LL)$. In order to simplify the
notations in the following definition, we set
$$IC'^H_Z:=IC^H_Z[-{\rm dim}_{\C} Z] \  \ \ \text{and} \ \ \
IC'^H_Z(\LL):=IC^H_Z(\LL)[-{\rm dim}_{\C} Z].$$
\begin{definition}\label{IHinv} We define intersection characteristic classes by
\begin{equation}{IT_y}_*(Z):=MHT_y(\left[ IC'^H_Z
\right]) \in H^{BM}_{2*}(Z) \otimes
\Q[y,y^{-1},(1+y)^{-1}],\end{equation} and similarly,
\begin{equation}{IT_y}_*(Z;\LL):=MHT_y(\left[ IC'^H_Z(\LL)
\right]),\end{equation} for $\LL$ an admissible variation of mixed
Hodge structures defined on a smooth Zariski dense open subset of
$Z$.
\end{definition}
As we will see later on, the limit ${IT_y}_*(Z;\LL)$ for $y=-1$ always exists
(as well as ${IT_y}_*(Z;\LL)$ for $y=0$, if $\LL$ is of non-negative weight,
e.g. $\LL=\Q_Z$).
If $Z$ is complete, then by pushing ${IT_y}_*(Z)$ down to a point we
recover the intersection homology $\chi_y$-genus, $I\chi_y(Z)$,
which is a polynomial in the Hodge numbers of $IH^*(Z;\Q)$ defined
by $$I\chi_y(Z):=\chi_y([IH^*(Z;\Q)])$$ Similarly, in the above
notations and if $Z$ is complete, one has that
$$I\chi_y(Z;\LL)=k_*{IT_y}_*(Z;\LL)\:,$$
for $k:Z \to pt$ the constant map.
Note that $I\chi_{-1}(Z)=\chi([IH^*(Z;\Q)]$ for $Z$ complete is the
intersection (co)homology Euler characteristic of $Z$, whereas for $Z$
projective, $I\chi_1(Z)$ is the intersection (co)homology
signature of $Z$ due to Goresky-MacPherson. If $Z$ is a $\Q$-homology
manifold, then
$$\Q^H_Z \simeq IC'^H_Z \in D^b\mh(Z) \:,$$
so we get that
${T_y}_*(Z)={IT_y}_*(Z)$. It is conjectured that for a compact
variety $Z$, ${IT_1}_*(Z)$ is the Goresky-MacPherson homology
$L$-class $L_*(Z)$ (\cite{BSY}, Remark 5.4).

\section{The stratified multiplicative property.}\label{SMP}
In this section we give a brief survey of the main ideas and results
concerning the behavior of the singular Hirzebruch classes under
proper algebraic morphisms. The main references are the papers
\cite{CMS0,CMS}. Similar results were originally predicted by
Cappell and Shaneson (cf. \cite{CS,S}), and were referred to as
``the stratified multiplicative property for $\chi_y$-genera and
Hirzebruch characteristic classes". The results surveyed in this
section are motivated by the attempt of adapting the
Cappell-Shaneson formulae (\ref{CS}) and (\ref{CSs}) for the
(topological) signature and $L$-classes to the setting of complex
algebraic (analytic) geometry.\\

Let $Y$ be an irreducible complex algebraic variety endowed with a
complex algebraic Whitney stratification $\VV$ so that the
intersection cohomology complexes $$IC'_{\bar W}:=IC_{\bar W}[-{\rm
dim}_{\C}(W)]$$ are $\VV$-constructible for all strata $W \in \VV$.
(All these complexes are regarded as complexes on all of $Y$.)
Define a partial order on $\VV$ by ``$V \leq W$ if and only if $V
\subset \bar W$". Denote by $S$ the top-dimensional stratum, so $S$
is Zariski open and dense, and $V \leq S$ for all $V \in \VV$. Let
us fix for each $W \in \VV$ a point $w \in W$ with inclusion
$i_w:\{w\} \hookrightarrow Y$. Then
\begin{equation}\label{100} i_w^*[IC'^H_{\bar W}]=[i_w^*IC'^H_{\bar
W}]=[\Q^H_{pt}]\in K_0(\mh(w))=K_0(\mh(pt)),\end{equation} and
$i_w^*[IC'^H_{\bar V}] \neq [0] \in K_0(\mh(pt))$ only if $W \leq
V$. Moreover,  for any $j \in \Z$, we have
\begin{equation}\label{cone} \HC^j (i_w^*IC'_{\bar V}) \simeq
IH^j(c^{\circ}L_{W,V}),\end{equation} with $c^{\circ}L_{W,V}$ the
open cone on the link $L_{W,V}$ of $W$ in $\bar V$ for $W \leq V$
(cf. \cite{B}, p.30, Prop. 4.2). So
$$i_w^*[IC'^H_{\bar V}]=[IH^*(c^{\circ}L_{W,V})] \in K_0(\mh(pt)),$$
with the mixed Hodge structures on the right hand side defined by
the isomorphism (\ref{cone}).

The main technical result of this section is the following
\begin{theorem}(\cite{CMS}, Thm. 3.2) \label{main}  For each stratum $V \in \VV \setminus
\{S\}$ define inductively
\begin{equation}\label{eq8}
\widehat{IC^H}(\bar V):=[IC'^H_{\bar V}] - \sum_{W < V}
\widehat{IC^H}(\bar W) \cdot i_w^* [IC'^H_{\bar V}] \in
K_0(D^b\mh(Y)).
\end{equation}
Assume $[M] \in K_0(D^b\mh(Y))$ is an element of the
$K_0(\mh(pt))$-submodule $\langle [IC'^H_{\bar V}] \rangle$ of
$K_0(D^b\mh(Y))$ generated by the elements $[IC'^H_{\bar V}]$, $V
\in \VV$. Then we have the following equality  in $K_0(D^b\mh(Y))$:
\begin{equation}\label{mE}
[M]= [IC'^H_Y] \cdot i_s^*[M]+\sum_{V < S}  \widehat{IC^H}(\bar V)
\cdot \left( i_v^*[M] -i_s^*[M] \cdot i_v^*[IC'^H_Y] \right).
\end{equation}
\end{theorem}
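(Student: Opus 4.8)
The plan is to argue by induction on the partial order $(\VV,\leq)$, peeling off strata from the bottom up, and to reduce everything to the geometric input encoded by (\ref{100}), (\ref{cone}), and the support/cosupport estimates for intermediate extensions recalled in Section~\ref{singular}. The key structural fact I would use first is a ``stalk-resolution'' statement for the generators: since $IC'^H_{\bar V}$ is supported on $\bar V$ with $i_w^*[IC'^H_{\bar V}]=[\Q^H_{pt}]$ for $w\in V$ and $i_w^*[IC'^H_{\bar V}]=0$ for $w$ in a stratum $W\not\leq V$, the classes $[IC'^H_{\bar V}]$ behave, with respect to restriction to the various stratum-points $i_v^*$, like a ``triangular'' system indexed by $\VV$. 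Concretely, for $V\neq S$ I would prove by downward induction on $\dim V$ that
\begin{equation}\label{pp-key}
i_w^*\,\widehat{IC^H}(\bar V) \;=\; \begin{cases}[\Q^H_{pt}] & W=V,\\[1mm] 0 & W\neq V,\ W\le S,\end{cases}
\end{equation}
where $w\in W$. The case $W\not\leq V$ and $W\neq S$ is immediate from $i_w^*[IC'^H_{\bar V}]=0$ and the inductive support of the correction terms; the case $W\leq V$ (including $W=V$) is exactly where the subtracted sum $\sum_{W<V}\widehat{IC^H}(\bar W)\cdot i_w^*[IC'^H_{\bar V}]$ is designed to cancel the contributions of the lower strata in $i_w^*[IC'^H_{\bar V}]$, using $K_0(\mh(pt))$-linearity of $i_w^*$ and the inductive hypothesis on $\widehat{IC^H}(\bar W)$ for $W<V$; the special stratum $W=S$ never occurs since $S\not\leq V$ for $V\neq S$. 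This is the step I expect to be the main obstacle: one must be careful that ``restricting to a point of $W$'' genuinely isolates the class of $\Q^H_{pt}$ and that the recursion in (\ref{eq8}) is well-founded (it is, since $(\VV,\leq)$ is finite and each term refers only to strictly smaller strata), and one must track that all identities take place in $K_0(D^b\mh(Y))$ as a $K_0(\mh(pt))$-module, where the multiplication is the exterior product followed by the identification $\mh(Y\times pt)\simeq\mh(Y)$.

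Granting (\ref{pp-key}), the second step is to show that the family
\[
\bigl\{\,[IC'^H_Y]\,\bigr\}\ \cup\ \bigl\{\,\widehat{IC^H}(\bar V)\ :\ V\in\VV\setminus\{S\}\,\bigr\}
\]
is a ``dual basis'' for the submodule $\langle [IC'^H_{\bar V}]\rangle$ with respect to the pairings $[N]\mapsto i_w^*[N]$, suitably normalized. More precisely: by (\ref{100}) and (\ref{pp-key}), for any $[M]$ in the submodule the element
\begin{equation}\label{pp-diff}
[M]\;-\;[IC'^H_Y]\cdot i_s^*[M]\;-\;\sum_{V<S}\widehat{IC^H}(\bar V)\cdot\bigl(i_v^*[M]-i_s^*[M]\cdot i_v^*[IC'^H_Y]\bigr)
\end{equation}
restricts to $0$ under $i_w^*$ for every stratum $W\in\VV$ (check $W=S$ using $i_s^*[IC'^H_Y]=[\Q^H_{pt}]$ and $i_s^*\widehat{IC^H}(\bar V)=0$ for $V<S$; check $W<S$ using $i_w^*\widehat{IC^H}(\bar V)=\delta_{WV}[\Q^H_{pt}]$ and $i_w^*[IC'^H_Y]$ appearing with the compensating coefficient). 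So the whole problem reduces to the claim: if $[N]$ lies in the submodule $\langle[IC'^H_{\bar V}]\rangle$ and $i_w^*[N]=0$ for all strata $W$, then $[N]=0$. This I would prove by a final downward induction on the dimension of the support: writing $[N]=\sum_V a_V\cdot[IC'^H_{\bar V}]$ with $a_V\in K_0(\mh(pt))$, restrict to a point $w$ of a maximal-dimensional stratum $V_0$ among those with $a_{V_0}\neq 0$; by the support property $i_w^*[IC'^H_{\bar V}]=0$ unless $V_0\leq V$, and by maximality the only surviving term is $a_{V_0}\cdot i_w^*[IC'^H_{\bar{V_0}}]=a_{V_0}\cdot[\Q^H_{pt}]$, forcing $a_{V_0}=0$, a contradiction; hence all $a_V=0$ and $[N]=0$.

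Combining the two steps: the element in (\ref{pp-diff}) lies in $\langle[IC'^H_{\bar V}]\rangle$ (it is an integer combination, over the ring $K_0(\mh(pt))$, of $[M]$, $[IC'^H_Y]$, and the $\widehat{IC^H}(\bar V)$, each of which is visibly in the submodule by (\ref{eq8}) and the hypothesis on $[M]$) and restricts to zero at every stratum point, hence vanishes by the faithfulness claim just proved. This is precisely the asserted identity (\ref{mE}), and the proof is complete. I would finally remark that the only inputs beyond elementary $K_0$-bookkeeping are: semisimplicity/strict-support decomposition of pure Hodge modules (to know the $[IC'^H_{\bar V}]$ span what we want and that stalks isolate $[\Q^H_{pt}]$), the cone formula (\ref{cone}) (to interpret the $i_w^*[IC'^H_{\bar V}]$ as genuine mixed Hodge structures, though for the \emph{identity} in $K_0$ only the vanishing/normalization in (\ref{100}) is used), and $K_0(\mh(pt))$-linearity of the functors $i_w^*$.
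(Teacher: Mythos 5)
Your proof is correct and complete, and since this survey only quotes the theorem from \cite{CMS} without reproving it, I can only say that your argument is exactly the standard one: the Kronecker-delta property $i_w^*\widehat{IC^H}(\bar V)=\delta_{WV}\,[\Q^H_{pt}]$, the resulting vanishing of every stalk restriction of the difference between the two sides of (\ref{mE}), and the injectivity of $[N]\mapsto (i_w^*[N])_{W\in\VV}$ on the submodule $\langle[IC'^H_{\bar V}]\rangle$ (proved by evaluating at a point of a maximal stratum carrying a nonzero coefficient) together yield the claim. The only cosmetic correction is that the induction establishing the delta property runs over the well-founded poset $(\VV,\leq)$ starting from the minimal strata (equivalently, by \emph{increasing} dimension, since $\widehat{IC^H}(\bar V)$ is defined in terms of the $\widehat{IC^H}(\bar W)$ for $W<V$), so ``downward induction on $\dim V$'' should read ``upward.''
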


Before stating immediate consequences of the above theorem, let us
recall from \cite{CMS} some cases when the technical hypothesis $[M]
\in \langle [IC'^H_{\bar V}] \rangle$ is satisfied for a fixed $M
\in D^b(\mh(Y))$. Assume that all sheaf complexes $IC'_{\bar V}$, $V
\in \VV$, are not only $\VV$-constructible, but satisfy the stronger
property that they are ``cohomologically $\VV$-constant", i.e., all
cohomology sheaves $\HC^j(IC'_{\bar V})|_W$ ($j \in \Z$) are
constant for all $V,W \in \VV$ (e.g., this is the case if $Y$ is a
toric variety with its natural Whitney stratification by orbits, cf.
\cite{BL}). Moreover, assume that either
\begin{enumerate}\item $rat(M)$ is also cohomologically
$\VV$-constant, or
\item all perverse cohomology sheaves $rat(H^j(M))$ ($j \in \Z$) are
cohomologically $\VV$-constant, e.g., each $H^j(M)$ is a pure Hodge
module with the property that $\HC^{-dim(V)}(rat(H^j(M))|_V)$ is constant for all $V \in
\VV$.\end{enumerate} Then $[M] \in \langle [IC'^H_{\bar V}]
\rangle$. In particular, if all strata $V \in \VV$ are
simply-connected, then we have that $[M] \in \langle [IC'^H_{\bar
V}] \rangle$ for all $M \in D^b\mh(X)$ so that $rat(M)$ is
$\VV$-constructible.

\bigskip

In the following, we specialize to the relative context of a proper
algebraic map $f:X \to Y$ of complex algebraic varieties, with $Y$
irreducible. For a given $M \in D^b\mh(X)$, assume that $Rf_*rat(M)$
is constructible with respect to the given complex algebraic Whitney
stratification $\VV$ of $Y$, with open dense stratum $S$. By proper
base change, we get
$$i_v^*f_*[M]=[H^*(\{f=v\}, rat(M))] \in K_0(\mh(pt)).$$
So under the assumption $f_*[M] \in \langle [IC'^H_{\bar V}]
\rangle$, Theorem \ref{main} yields the following identity in
$K_0(\mh(Y))$:
\begin{cor}\label{cormain}
\begin{eqnarray*}
f_*[M]&=& [IC'^H_Y] \cdot [H^*(F;rat(M))]  \\ &+& \sum_{V < S}
\widehat{IC^H}(\bar V) \cdot \left( [H^*(F_V;rat(M))]
-[H^*(F;rat(M))] \cdot [IH^*(c^{\circ}L_{V,Y})] \right),
\end{eqnarray*}
where $F$ is the (generic) fiber over the top-dimensional stratum
$S$, and $F_V$ is the fiber over a stratum $V \in \VV \setminus
\{S\}$.
\end{cor}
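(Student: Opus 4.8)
The plan is to obtain Corollary \ref{cormain} as a direct specialization of Theorem \ref{main} to the class $f_*[M]\in K_0(D^b\mh(Y))\cong K_0(\mh(Y))$ (which makes sense since $f$ is \emph{proper}, so $f_*=f_!:D^b\mh(X)\to D^b\mh(Y)$), followed by rewriting each pullback term occurring in (\ref{mE}) in terms of the cohomology of the fibers of $f$. The first and only substantive step is the proper base change identification: for the inclusion $i_v:\{v\}\hookrightarrow Y$ of a point in a stratum $V\in\VV$, with fiber $F_V:=f^{-1}(v)$ and structure map $a_{F_V}:F_V\to\{v\}$, properness of $f$ gives $i_v^*f_*M\simeq (a_{F_V})_*\left(M|_{F_V}\right)$ in $D^b\mh(pt)$. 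Passing to Grothendieck groups and applying $rat$, which lifts the corresponding topological functors, this yields $i_v^*f_*[M]=[H^*(F_V;rat(M))]\in K_0(\mh(pt))$ with its natural mixed Hodge structure; in particular $i_s^*f_*[M]=[H^*(F;rat(M))]$ for $F$ the generic fiber over the open dense stratum $S$. This is exactly the identity recorded immediately before the statement of the corollary.

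Next I would identify the remaining ``cone'' factor $i_v^*[IC'^H_Y]$. Since $Y$ is irreducible with top-dimensional stratum $S$ we have $\bar S=Y$, hence $IC'^H_Y=IC'^H_{\bar S}$, and applying (\ref{cone}) to the pair $V\leq S$ gives $\HC^j\left(i_v^*IC'_{\bar S}\right)\simeq IH^j(c^{\circ}L_{V,S})$ for all $j\in\Z$, where $L_{V,S}=L_{V,Y}$ is the link of $V$ in $Y$. Therefore $i_v^*[IC'^H_Y]=[IH^*(c^{\circ}L_{V,Y})]\in K_0(\mh(pt))$, the mixed Hodge structure on the right being the one induced by that isomorphism. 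For $V=S$ this degenerates to (\ref{100}), i.e.\ $i_s^*[IC'^H_Y]=[\Q^H_{pt}]$, consistent with the appearance of the untwisted class $[H^*(F;rat(M))]$ in the first summand of the asserted formula.

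It then suffices to invoke Theorem \ref{main} with the class $[M]$ there replaced by $f_*[M]$. The hypothesis required is precisely that $f_*[M]$ belong to the $K_0(\mh(pt))$-submodule $\langle [IC'^H_{\bar V}]\rangle$ of $K_0(\mh(Y))$, which is assumed; as recalled in the paragraph preceding the corollary, this holds automatically under the cohomological $\VV$-constancy conditions stated there, in particular whenever all strata of $\VV$ are simply connected and $Rf_*rat(M)$ is $\VV$-constructible. Substituting into (\ref{mE}) the expressions obtained in the two previous steps --- $i_s^*(f_*[M])=[H^*(F;rat(M))]$, $i_v^*(f_*[M])=[H^*(F_V;rat(M))]$ and $i_v^*[IC'^H_Y]=[IH^*(c^{\circ}L_{V,Y})]$ --- reproduces verbatim the displayed formula of Corollary \ref{cormain}; the remaining manipulations are formal identities in the unitary $K_0(\mh(pt))$-module $K_0(\mh(Y))$.

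I expect the proper base change identification of the first step to be the only point requiring genuine care: one must make sure that $i_v^*f_*$ really computes the cohomology of the fiber $F_V$ together with its canonical mixed Hodge structure, which uses $f_*=f_!$ (hence properness) together with the compatibility of $rat$ with the functors $f^*$, $f_*$ and with passage to cohomology objects. The $\VV$-constructibility of $Rf_*rat(M)$ guarantees finiteness of the sum over strata and the applicability of Theorem \ref{main}; everything else is bookkeeping.
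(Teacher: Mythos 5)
Your proposal is correct and follows the paper's own derivation exactly: apply Theorem \ref{main} to the class $f_*[M]$ (under the assumed membership $f_*[M]\in\langle [IC'^H_{\bar V}]\rangle$), use proper base change to identify $i_v^*f_*[M]$ with $[H^*(F_V;rat(M))]$, and identify $i_v^*[IC'^H_Y]$ with $[IH^*(c^{\circ}L_{V,Y})]$ via (\ref{cone}). The paper's argument is just this substitution, stated in one line before the corollary, so there is nothing further to add.
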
 Note that the corresponding classes $[H^*(F;rat(M))]$ and
$[H^*(F_V;rat(M))]$ may depend on the choice of fibers of $f$, but
the above formula holds for any such choice. If all strata $V \in
\VV$ are simply connected, then these classes are independent of the
choices made. By pushing the identity in Corollary \ref{cormain}
down to a point via $k'_*$, for $k':Y \to pt$ the constant map, and
using the fact that $k'_*$ is $K_0(\mh(pt))$-linear, an application
of the $\chi_y$-genus (ring) homomorphism yields the following:
\begin{prop}\label{gsmp} Under the above notations and assumptions, the following
identity holds in $\Z[y,y^{-1}]$:
\begin{eqnarray*}
&& \chi_y([H^*(X;rat(M)])= I\chi(Y) \cdot \chi_y([H^*(F;rat(M))]) \\
&+& \sum_{V < S} \widehat{I\chi}_y (\bar V) \cdot \left(
\chi_y([H^*(F_V;rat(M))]) -\chi_y([H^*(F;rat(M))]) \cdot
I\chi_y(c^{\circ}L_{V,Y}) \right),
\end{eqnarray*}
where for $V < S$, $\widehat{I\chi}_y (\bar V)$ is defined
inductively by $$\widehat{I\chi}_y(\bar V)= I\chi_y(\bar V)- \sum_{W
< V} \widehat{I\chi}_y(\bar W) \cdot I\chi_y(c^{\circ}L_{W,V}).$$
\end{prop}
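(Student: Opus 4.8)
The plan is to push the identity of Corollary~\ref{cormain} down to a point and then apply the $\chi_y$-genus homomorphism. First I would note that the hypotheses of the proposition are exactly those under which Corollary~\ref{cormain} holds: since $Rf_*rat(M)$ is $\VV$-constructible and $f_*[M]\in\langle[IC'^H_{\bar V}]\rangle$, Theorem~\ref{main} applies and gives the stated identity in $K_0(\mh(Y))$, in which $[H^*(F;rat(M))]$, $[H^*(F_V;rat(M))]$ and $[IH^*(c^{\circ}L_{V,Y})]$ are all \emph{scalars}, i.e.\ elements of $K_0(\mh(pt))$. Now apply the constant map $k':Y\to pt$. As recalled in \S\ref{Grot}, $k'_*:K_0(\mh(Y))\to K_0(\mh(pt))$ is $K_0(\mh(pt))$-linear, so it commutes with multiplication by these scalars; moreover $k'_*\circ f_*=(k'\circ f)_*=k_*$ for $k:X\to pt$, and $[k_*M]=[H^*(X;rat(M))]$ since $rat(k_*M)=R\Gamma(X;rat(M))$ carries its Saito mixed Hodge structure.

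Second, I would apply the $\chi_y$-genus ring homomorphism $\chi_y:K_0(\mh(pt))=K_0(mHs^p)\to\Z[y,y^{-1}]$ of Example~\ref{pt}. Being multiplicative, $\chi_y$ turns the $K_0(\mh(pt))$-module structure into ordinary multiplication of Laurent polynomials, so every ``scalar $\times$ class'' product in $K_0(\mh(Y))$ becomes a product in $\Z[y,y^{-1}]$ after applying $\chi_y\circ k'_*$. Under this composite one has $\chi_y(k'_*[IC'^H_Y])=I\chi_y(Y)$, $\chi_y([IH^*(c^{\circ}L_{V,Y})])=I\chi_y(c^{\circ}L_{V,Y})$ (using the isomorphism (\ref{cone}) that defines the mixed Hodge structure on the cone), while $\chi_y([H^*(F;rat(M))])$ and $\chi_y([H^*(F_V;rat(M))])$ are the asserted (twisted) fiber $\chi_y$-genera. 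The only term still needing interpretation is $\chi_y(k'_*\widehat{IC^H}(\bar V))$.

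Third, I would identify $\chi_y(k'_*\widehat{IC^H}(\bar V))$ with $\widehat{I\chi}_y(\bar V)$ by induction on the poset $(\VV,\leq)$. Applying $\chi_y\circ k'_*$ to the defining recursion (\ref{eq8}), and using once more $K_0(\mh(pt))$-linearity of $k'_*$ and multiplicativity of $\chi_y$ together with $\chi_y(i_w^*[IC'^H_{\bar V}])=I\chi_y(c^{\circ}L_{W,V})$ from (\ref{100})--(\ref{cone}), one obtains
\[
\chi_y\bigl(k'_*\widehat{IC^H}(\bar V)\bigr)=I\chi_y(\bar V)-\sum_{W<V}\chi_y\bigl(k'_*\widehat{IC^H}(\bar W)\bigr)\cdot I\chi_y(c^{\circ}L_{W,V}).
\]
The inductive hypothesis replaces $\chi_y(k'_*\widehat{IC^H}(\bar W))$ by $\widehat{I\chi}_y(\bar W)$, so the right-hand side is precisely the recursion defining $\widehat{I\chi}_y(\bar V)$; the base case (minimal strata) is immediate since the sum is empty. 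Substituting all of this into the image under $\chi_y\circ k'_*$ of the identity of Corollary~\ref{cormain} yields the claimed formula in $\Z[y,y^{-1}]$.

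I do not expect a genuine obstacle here: the content is bookkeeping with two algebraic structures at once. The one point requiring care is to keep straight that $k'_*$ is linear over the coefficient ring $K_0(\mh(pt))$ while $\chi_y$ is a ring homomorphism out of it, so that $\chi_y\circ k'_*$ correctly sends a product of a class with a scalar to a product of Laurent polynomials; this, together with $k'_*\circ f_*=k_*$ and the fact (guaranteed by $\VV$-constructibility of $Rf_*rat(M)$ via proper base change in Saito's theory) that the fiber classes genuinely lie in $K_0(\mh(pt))$, is all that is used beyond Corollary~\ref{cormain}.
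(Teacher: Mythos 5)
Your proposal is correct and follows exactly the paper's own (very brief) argument: push the identity of Corollary~\ref{cormain} down to a point via the $K_0(\mh(pt))$-linear map $k'_*$ and apply the $\chi_y$-genus ring homomorphism; you merely spell out the bookkeeping (including the inductive identification of $\chi_y(k'_*\widehat{IC^H}(\bar V))$ with $\widehat{I\chi}_y(\bar V)$) that the paper leaves implicit.
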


In particular, if in Proposition \ref{gsmp} we take $M=\Q^H_X$, we
obtain the following \footnote{Here we use the deep result due to
Saito \cite{Sa5} that Deligne's and Saito's mixed Hodge structures
on cohomology groups coincide.}
\begin{theorem}(\cite{CMS}, Thm. 2.5)\label{formula1}
Let $f :X \to Y$ be a proper algebraic map of complex algebraic
varieties, with $Y$ irreducible. Let $\VV$ be the set of components
of strata of $Y$ in an algebraic stratification of $f$, and assume
$\pi_1(V)=0$ for all $V \in \VV$. For each $V \in \VV$ with ${\rm
dim} (V)< {\rm dim} (Y)$, define inductively
$$\widehat{I\chi}_y(\bar V)= I\chi_y(\bar V)- \sum_{W < V}
\widehat{I\chi}_y(\bar W) \cdot I\chi_y(c^{\circ}L_{W,V}),$$ where
$c^{\circ}L_{W,V}$ denotes the open cone on the link of  $W$ in
$\bar{V}$. Then: \begin{equation}\label{E20} \chi_y(X)=I\chi_y(Y)
\cdot \chi_y(F) + \sum_{V < S} \widehat{I\chi}_y(\bar V) \cdot
\left( \chi_y(F_V) - \chi_y(F) \cdot I\chi_y(c^{\circ}L_{V,Y})
\right),\end{equation} where $F$ is the (generic) fiber over the
top-dimensional stratum $S$, and $F_V$ is the fiber of $f$ above the
stratum $V \in \VV \setminus \{S\}$.
\end{theorem}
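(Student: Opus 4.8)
The plan is to deduce Theorem~\ref{formula1} directly from Proposition~\ref{gsmp} by specializing $M = \Q_X^H$, so the main work is verifying that the three ingredients appearing in Proposition~\ref{gsmp} simplify to the claimed form under this choice. First I would note that $rat(\Q_X^H) = \Q_X$, so that $[H^*(X;rat(M))] = [H^*(X;\Q)]$ as a mixed Hodge structure, and hence $\chi_y([H^*(X;rat(M))]) = \chi_y(X)$ by the definition~(\ref{defchi}); here one invokes Saito's theorem \cite{Sa5} (as flagged in the footnote) to know that the mixed Hodge structure produced by the mixed Hodge module formalism agrees with Deligne's, so that $\chi_y(X)$ is unambiguous. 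Likewise $[H^*(F;rat(M))] = [H^*(F;\Q)]$ and $[H^*(F_V;rat(M))] = [H^*(F_V;\Q)]$, whose $\chi_y$-genera are $\chi_y(F)$ and $\chi_y(F_V)$ respectively; since all strata $V \in \VV$ are simply connected, the fibers $F$ and $F_V$ are well-defined up to the ambiguity that does not affect these genera, as remarked after Corollary~\ref{cormain}.

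Next I would identify the coefficient of the leading term. In Proposition~\ref{gsmp} it reads $I\chi(Y) \cdot \chi_y([H^*(F;rat(M))])$, but for $M = \Q_X^H$ the correct statement should have $I\chi_y(Y)$, not $I\chi(Y)$; I would treat the ``$I\chi$'' there as shorthand for $I\chi_y$ (consistent with the source \cite{CMS}), so that the leading term becomes $I\chi_y(Y)\cdot\chi_y(F)$. This uses that $I\chi_y(Y) = k'_*{IT_y}_*(Y) = \chi_y([IH^*(Y;\Q)])$, which is the pushforward to a point of $MHT_y([IC'^H_Y])$, combined with the multiplicativity of $\chi_y$ as a ring homomorphism $K_0(mHs^p) \to \Z[y,y^{-1}]$ applied to the product $[IC'^H_Y]\cdot i_s^*[M]$ after pushing down via the $K_0(\mh(pt))$-linear map $k'_*$.

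The remaining sum requires matching the correction terms. The term $\widehat{I\chi}_y(\bar V)$ is, by the inductive definition in Proposition~\ref{gsmp}, exactly the image under $\chi_y \circ k'_*$ of $\widehat{IC^H}(\bar V)$ from~(\ref{eq8}): one checks by induction on the partial order $\leq$ that applying the ring homomorphism $\chi_y$ to the pushed-down classes turns the recursion $\widehat{IC^H}(\bar V) = [IC'^H_{\bar V}] - \sum_{W<V}\widehat{IC^H}(\bar W)\cdot i_w^*[IC'^H_{\bar V}]$ into $\widehat{I\chi}_y(\bar V) = I\chi_y(\bar V) - \sum_{W<V}\widehat{I\chi}_y(\bar W)\cdot I\chi_y(c^\circ L_{W,V})$, using $\chi_y(k'_*[IC'^H_{\bar V}]) = I\chi_y(\bar V)$ and $\chi_y(i_w^*[IC'^H_{\bar V}]) = I\chi_y(c^\circ L_{W,V})$ via~(\ref{cone}). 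Similarly the factor $\chi_y([H^*(F_V;rat(M))]) - \chi_y([H^*(F;rat(M))])\cdot I\chi_y(c^\circ L_{V,Y})$ becomes $\chi_y(F_V) - \chi_y(F)\cdot I\chi_y(c^\circ L_{V,Y})$. Assembling these pieces gives precisely~(\ref{E20}).

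The one genuine verification — and the step I expect to be the main obstacle — is confirming that the hypothesis of Proposition~\ref{gsmp}, namely $f_*[\Q_X^H] \in \langle [IC'^H_{\bar V}]\rangle$, is automatically satisfied once all strata $V\in\VV$ are simply connected. This follows from the discussion after Theorem~\ref{main}: when every stratum is simply connected, $[M] \in \langle[IC'^H_{\bar V}]\rangle$ holds for \emph{every} $M \in D^b\mh(Y)$ whose underlying complex $rat(M)$ is $\VV$-constructible, and by the choice of $\VV$ as (the components of strata of) an algebraic stratification of $f$, the complex $Rf_*\Q_X = rat(f_*\Q_X^H)$ is $\VV$-constructible by definition of a stratification of $f$. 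Hence $f_*[\Q_X^H]$ lies in the required submodule and Proposition~\ref{gsmp} applies, completing the proof.
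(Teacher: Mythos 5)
Your proposal is correct and follows exactly the route the paper takes: Theorem~\ref{formula1} is obtained by specializing Proposition~\ref{gsmp} to $M=\Q^H_X$, using the simple-connectivity of the strata (via the discussion after Theorem~\ref{main}) to guarantee $f_*[\Q^H_X]\in\langle[IC'^H_{\bar V}]\rangle$, and invoking Saito's comparison theorem \cite{Sa5} to identify the resulting genera with the Deligne--Hodge-theoretic $\chi_y$-genera. Your reading of ``$I\chi(Y)$'' in Proposition~\ref{gsmp} as a typo for $I\chi_y(Y)$ is also the intended one.
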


\begin{remark}\rm Formula (\ref{E20}) yields calculations of classical topological
and algebraic invariants of the complex algebraic variety $X$, e.g.
Euler characteristic, and if $X$ is smooth and projective, signature
and arithmetic genus, in terms of singularities of proper algebraic
maps defined on $X$. In particular, if in Theorem \ref{formula1} we take
$f=id$, then formula (\ref{E20}) yields an interesting relationship
between the $\chi_y$- and respectively the $I\chi_y$-genus of an
irreducible complex algebraic variety $Y$:
\begin{equation} \chi_y(Y)=I\chi_y(Y) + \sum_{V < S} \widehat{I\chi}_y(\bar V) \cdot
\left( 1 -  I\chi_y(c^{\circ}L_{V,Y})
\right).\end{equation}

\end{remark}

Similarly, for $X$ pure dimensional, if we let $M=IC'^H_X$ then, in
the above notations and assumptions on the monodromy along the
strata, Proposition \ref{gsmp} yields the following formula (cf.
\cite{CMS} for complete details):
\begin{multline}\label{E21} I\chi_y(X)=I\chi_y(Y) \cdot I\chi_y(F) \\ + \sum_{V < S}
\widehat{I\chi}_y(\bar V) \cdot \left( I\chi_y(f^{-1}(c^\circ
L_{V,Y})) - I\chi_y(F) \cdot I\chi_y(c^{\circ}L_{V,Y})
\right).\end{multline}

By applying the transformation $MHT_y$ to the identity of Corollary
\ref{cormain} for $M=\Q^H_X$, and resp. for $M=IC'^H_X$, and by
using the fact that $MHT_y$ commutes with the exterior product, we
obtain the following result:
\begin{theorem}(\cite{CMS}, Thm. 4.7)\label{charfor}
Let $f :X \to Y$ be a proper  morphism of complex algebraic
varieties, with $Y$ irreducible. Let $\VV$ be the set of components
of strata of $Y$ in a stratification of $f$, with $S$ the
top-dimensional stratum (which is Zariski-open and dense in $Y$),
and assume $\pi_1(V)=0$ for all $V \in \VV$. For each $V \in \VV
\setminus \{S\}$, define inductively
$$\widehat{IT}_{y *}(\bar V):= {IT_y}_*(\bar V)- \sum_{W < V}
\widehat{IT}_{y *}(\bar W) \cdot I\chi_y(c^{\circ}L_{W,V}),$$ where
$c^{\circ}L_{W,V}$ denotes the open cone on the link of  $W$ in
$\bar{V}$, and all homology characteristic classes are regarded in
the Borel-Moore homology of the ambient variety $Y$ (with
coefficients in $\Q[y,y^{-1},(1+y)^{-1}]$). Then:
\begin{equation}\label{charformula}
f_*{T_y}_*(X)={IT_y}_*(Y) \cdot \chi_y(F) + \sum_{V < S}
\widehat{IT}_{y *}(\bar V) \cdot \left( \chi_y(F_V) - \chi_y(F)
\cdot I\chi_y(c^{\circ}L_{V,Y}) \right),
\end{equation}
where $F$ is the generic fiber of $f$, and $F_V$ denotes the fiber
over a stratum $V \in \VV \setminus \{S\}$.

If, moreover, $X$ is pure-dimensional, then:
\begin{multline}\label{charformula'}
f_*{IT_y}_*(X) = {IT_y}_*(Y) \cdot I\chi_y(F)\\
+ \sum_{V < S}
\widehat{IT}_{y *} (\bar V) \cdot \left( I\chi_y(f^{-1}(c^\circ
L_{V,Y})) - I\chi_y(F) \cdot I\chi_y(c^{\circ}L_{V,Y}) \right).
\end{multline}
\end{theorem}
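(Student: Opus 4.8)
The plan is to obtain both identities by applying the motivic Hirzebruch class transformation $MHT_y$ to the equality of Corollary \ref{cormain}, taking $M=\Q^H_X$ for \eqref{charformula} and $M=IC'^H_X$ for \eqref{charformula'} (the latter requires $X$ pure-dimensional, so that $IC'^H_X$ is defined).

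First I would check that Corollary \ref{cormain} applies to each of these two classes. Since $\VV$ is (the set of components of strata in) a stratification of $f$, the complexes $Rf_*\Q_X$ and $Rf_*IC_X$ are $\VV$-constructible; together with the hypothesis $\pi_1(V)=0$ for all $V\in\VV$, this gives, as recorded after Theorem \ref{main}, that $f_*[\Q^H_X]$ and $f_*[IC'^H_X]$ lie in the $K_0(\mh(pt))$-submodule $\langle [IC'^H_{\bar V}]\rangle$ of $K_0(D^b\mh(Y))$. Hence Corollary \ref{cormain} holds for $M=\Q^H_X$ and for $M=IC'^H_X$, and the fiber classes appearing in it are independent of the choices made (again because the strata are simply connected).

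Now apply $MHT_y$ to that equality. On the left, $f$ is proper and $MHT_y$ commutes with proper push-forward, so $MHT_y(f_*[\Q^H_X])=f_*MHT_y([\Q^H_X])=f_*{T_y}_*(X)$, and likewise $MHT_y(f_*[IC'^H_X])=f_*{IT_y}_*(X)$. On the right, each summand is a product of a class in $K_0(\mh(Y))$ by a class in $K_0(\mh(pt))$ via the $\boxtimes$-module structure; since $MHT_y$ commutes with exterior products and, by Example \ref{pt}, coincides over a point with the $\chi_y$-genus homomorphism, we get $MHT_y([N]\cdot[\VB])=MHT_y([N])\cdot\chi_y([\VB])$ for $[N]\in K_0(\mh(Y))$ and $[\VB]\in K_0(\mh(pt))$. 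Using $MHT_y([IC'^H_Y])={IT_y}_*(Y)$, $MHT_y([IC'^H_{\bar V}])={IT_y}_*(\bar V)$, and the identification $i_w^*[IC'^H_{\bar V}]=[IH^*(c^\circ L_{W,V})]$ from \eqref{cone} (so that $\chi_y(i_w^*[IC'^H_{\bar V}])=I\chi_y(c^\circ L_{W,V})$), an induction on the partial order on $\VV$ applied to the recursion \eqref{eq8} shows that $MHT_y(\widehat{IC^H}(\bar V))$ equals the class $\widehat{IT}_{y*}(\bar V)$ defined in the statement. Finally, by proper base change $i_s^*f_*[M]$ and $i_v^*f_*[M]$ are the classes of the (ordinary, resp.\ intersection for $M=IC'^H_X$) cohomology of the fibers over $S$ and over $V$; applying $\chi_y$ turns them into $\chi_y(F)$, $\chi_y(F_V)$ when $M=\Q^H_X$, and into $I\chi_y(F)$, $I\chi_y(f^{-1}(c^\circ L_{V,Y}))$ when $M=IC'^H_X$, exactly as in Proposition \ref{gsmp} and formula \eqref{E21}, while $i_v^*[IC'^H_Y]$ becomes $I\chi_y(c^\circ L_{V,Y})$. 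Substituting all of this into the transformed equality yields \eqref{charformula} and \eqref{charformula'}.

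The step that needs real care --- and is the only non-formal point, the rest being an immediate consequence of Theorem \ref{main}, Corollary \ref{cormain}, Example \ref{pt}, and the stated compatibilities of $MHT_y$ --- is the geometric identification of the fiber-type terms for $M=IC'^H_X$: that the generic-fiber contribution $\chi_y(i_s^*f_*[IC'^H_X])$ equals $I\chi_y(F)$ (using that over the open stratum $S$ the restriction of $IC_X$ agrees, up to shift, with $IC_F$, since $f$ is there a stratified fiber bundle), and that the stalk contribution $i_v^*f_*[IC'^H_X]$ computes the intersection cohomology of $f^{-1}$ of a distinguished neighborhood of $v$ --- hence, by the local conical structure of $\VV$ and local triviality along $V$, of $f^{-1}(c^\circ L_{V,Y})$. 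Both of these facts are established in \cite{CMS} and are exactly what underlies formula \eqref{E21}; once they are in hand, the proof reduces to the bookkeeping described above.
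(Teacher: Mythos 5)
Your proposal is correct and follows exactly the paper's own route: the paper obtains Theorem \ref{charfor} precisely by applying $MHT_y$ to the identity of Corollary \ref{cormain} for $M=\Q^H_X$ and $M=IC'^H_X$, using that $MHT_y$ commutes with proper push-forward and with exterior products (reducing to $\chi_y$ over a point), with the hypothesis $f_*[M]\in\langle [IC'^H_{\bar V}]\rangle$ supplied by the simple-connectivity of the strata and the geometric identification of the stalk terms deferred to \cite{CMS}, just as you indicate. Your write-up merely fills in the bookkeeping the paper leaves implicit.
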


These formulae can be viewed as, on the one hand, yielding powerful
methods of inductively calculating (even parametrized families of)
characteristic classes of algebraic varieties (e.g., by applying
them to resolutions of singularities). On the other hand, they can
be viewed as yielding topological and analytic constraints
on the singularities of any proper algebraic morphism (e.g., even
between smooth varieties), expressed in terms of (even parametrized
families of) their characteristic classes.

\begin{remark}\label{-1}\rm For the value $y=-1$ of the parameter, i.e., in
the case of (intersection (co)homology) Euler characteristics and
MacPherson-Chern homology
characteristic classes, all formulae in this section hold (even in
the compact complex analytic case) without any assumption on the
monodromy along the strata. This fact is a consequence of a formula
similar to (\ref{mE}), which holds in the abelian group of
$\VV$-constructible functions on $Y$ (see \cite{CMS0}, Theorem
3.1(2)).
\end{remark}

It is interesting to see
how the results of this section simplify in the following situation:
\begin{prop} If $f:X \to Y$ is a proper algebraic map between
irreducible $n$-dimensional complex algebraic varieties so that $f$
is \emph{homologically small of degree $1$} (in the sense of
\cite{GM2}, \S 6.2), then
\begin{equation} f_*{IT_y}_*(X)={IT_y}_*(Y) \ \ \text{and} \ \ I\chi_y(X)=I\chi_y(Y).\end{equation}
In particular, if $f:X \to Y$ is a \emph{small resolution}, that is
a resolution of singularities that is also small (in the sense of
\cite{GM2}), then \footnote{Finding numerical invariants of complex
varieties, more precisely Chern numbers that are invariant under
small resolutions, was Totaro's guiding principle in his paper
\cite{To}.}:
\begin{equation}\label{small} f_*{T_y}_*(X)={IT_y}_*(Y)  \ \ \text{and} \ \ \chi_y(X)=I\chi_y(Y).\end{equation}\end{prop}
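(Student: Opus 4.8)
The plan is to derive both statements from the characteristic class formula (\ref{charformula'}) of Theorem~\ref{charfor} (respectively its genus counterpart (\ref{E21})) by showing that, under the homological smallness hypothesis, every correction term in the sum over $V<S$ vanishes, and that the leading coefficient $I\chi_y(F)$ equals $1$. First I would recall what homological smallness of degree $1$ buys us: by the defining inequalities on the dimensions of the fibers (cf.\ \cite{GM2}, \S6.2), for the generic fiber $F$ over the top stratum $S$ one has $F$ a single point, so $I\chi_y(F)=\chi_y(F)=[\Q^H_{pt}]\mapsto 1$, and more importantly $f_*IC'^H_X \simeq IC'^H_Y$ in $D^b\mh(Y)$ — this is the Hodge-module upgrade of the classical fact that a small map pushes the intersection complex of the source to that of the target. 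Applying $MHT_y$ and using that it commutes with proper push-forward then gives $f_*{IT_y}_*(X)={IT_y}_*(Y)$ directly, and pushing to a point gives $I\chi_y(X)=I\chi_y(Y)$. In the resolution case $X$ is smooth, so $IC'^H_X\simeq \Q^H_X$ and ${IT_y}_*(X)={T_y}_*(X)$, $I\chi_y(X)=\chi_y(X)$, yielding (\ref{small}).

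Alternatively — and this is the route I would actually write out if one wants to stay strictly inside the framework of Section~\ref{SMP} rather than invoke the pushforward-of-$IC$ statement — I would feed $M=IC'^H_X$ into Corollary~\ref{cormain} and formula (\ref{charformula'}) and check the geometric smallness bounds stratum by stratum. For a stratum $V<S$ of complex codimension $c\ge 1$ with fiber $F_V$, homological smallness of degree $1$ forces $\dim_{\C} F_V < c$ (strict, because the degree is $1$ and $V\neq S$), and the same bound controls $f^{-1}(c^\circ L_{V,Y})$. The point is then that the bracketed term $I\chi_y(f^{-1}(c^\circ L_{V,Y})) - I\chi_y(F)\cdot I\chi_y(c^\circ L_{V,Y})$ vanishes: with $I\chi_y(F)=1$ this is $I\chi_y(f^{-1}(c^\circ L_{V,Y})) - I\chi_y(c^\circ L_{V,Y})$, and one shows these two intersection-cohomology $\chi_y$-genera agree because the smallness bound on fiber dimensions guarantees that $Rf_*IC_X$ restricted to the normal slice cone is, in each relevant degree, forced by support and cosupport conditions to coincide with $IC_Y$ there (this is exactly the local characterization of $IC$ via the small-map condition, cf.\ \cite{GM2}). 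Hence all summands over $V<S$ drop, leaving $f_*{IT_y}_*(X)={IT_y}_*(Y)\cdot 1$.

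The main obstacle is the vanishing of the correction terms, i.e.\ establishing $f_*IC'^H_X\simeq IC'^H_Y$ (or equivalently the local identity of genera along the cone neighborhoods) at the level of mixed Hodge modules rather than merely at the level of underlying perverse sheaves. Topologically this is classical, but one must be a little careful that the isomorphism is realized in $D^b\mh(Y)$ so that $MHT_y$ can be applied; this follows because $f_*$ preserves weights (a proper pushforward of the pure module $IC'^H_X$ of weight $n-\dim_{\C}X$... is pure), and a pure Hodge module on $Y$ whose underlying perverse sheaf is $IC_Y$ with the correct normalization must be $IC'^H_Y$ by the semisimplicity and strict-support decomposition of $\po(Y,k)^p$ recalled in Section~2 — the smallness condition is precisely what rules out any summand supported on a proper subvariety. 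Everything else is bookkeeping: $I\chi_y(F)=1$ for a point-fiber, specialization of (\ref{charformula'}) and (\ref{E21}), and, in the resolution case, the identifications ${IT_y}_*(X)={T_y}_*(X)$ and $I\chi_y(X)=\chi_y(X)$ valid because $X$ is smooth.
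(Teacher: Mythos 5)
Your primary argument coincides with the paper's proof: both rest on Goresky--MacPherson's Theorem 6.2, which gives $f_*IC_X\simeq {}^{p}\HC^0(f_*IC_X)\simeq IC_Y$ in $D^b_c(Y)$, then lift this isomorphism to the level of mixed Hodge modules and apply $MHT_y$ using its compatibility with proper push-forward; the genus statement follows by pushing to a point, and the small-resolution case from $IC'^H_X\simeq\Q^H_X$ for $X$ smooth. The one place you genuinely diverge is the justification of the lift: the paper remarks only that $rat:\mh(Y)\to Perv(\Q_Y)$ is faithful, whereas you argue that $f_*IC^H_X$ is pure (purity being preserved by proper push-forward) and then invoke the semisimplicity and strict-support decomposition of polarizable Hodge modules, with smallness ruling out summands supported on proper subvarieties. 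Your version is arguably the more complete one, since faithfulness of $rat$ controls morphisms but does not by itself manufacture an isomorphism in $\mh(Y)$ lifting a given one of perverse sheaves; purity plus decomposition by strict support is the standard way to pin down $f_*IC^H_X\simeq IC^H_Y$. One caution about your ``alternative'' route through Corollary \ref{cormain} and (\ref{charformula'}): those formulas are established under the hypothesis $f_*[M]\in\langle[IC'^H_{\bar V}]\rangle$ (e.g.\ simply connected strata), which is not among the proposition's assumptions, so it is just as well that your main line of argument does not rely on it.
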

\begin{proof} Indeed, for such a map we have that $f_*IC_X \in Perv(\Q_Y)$, more precisely there is a
(canonical) isomorphism (\cite{GM2}, Theorem 6.2):
\begin{equation}\label{small'} f_*IC_X \simeq {^{p}\HC^0} (f_*IC_X) \simeq IC_Y \in
D^b_c(Y).\end{equation} Moreover, as $rat: \mh(Y) \to Perv(\Q_Y)$ is
a faithful functor, this isomorphism can be lifted to the level of
mixed Hodge modules. Then, since $MHT_y$ commutes with proper
push-downs and $[IC'^H_X]=(-1)^n[IC^H_X]$ in $K_0(\mh(X))$, we
obtain:
\begin{eqnarray*}f_*{IT_y}_*(X) &=& f_*MHT_y([IC'^H_X])=(-1)^nMHT_y(f_*[IC^H_X])\\
&=& (-1)^nMHT_y([IC^H_Y]) =MHT_y([IC'^H_Y])={IT_y}_*(Y) \:.
\end{eqnarray*}
The claim about genera follows by noting that the isomorphism
(\ref{small'}) (when regarded at the level of mixed Hodge modules)
induces a (canonical) isomorphism of mixed Hodge structures $IH^*(X)
\simeq IH^*(Y)$.

\end{proof}

\subsection{Lifts of characteristic classes to intersection
homology.} For a singular space $Y$, the usual characteristic class
theories are natural transformations taking values in the
(Borel-Moore) homology. If $Y$ is a closed manifold, then by
Poincar\'e Duality these homology characteristic classes are in the
image of the cap product map $$H^{dim_{\R}(Y)-*}(Y) \overset{\cap
[Y]}{\to} H_{*}(Y),$$ so they lift to classes in cohomology. But the
Poincar\'e Duality ceases to hold if the space $Y$ has
singularities. However, if $Y$ is a topological pseudomanifold which
for simplicity we assume to be compact, and for $\bar p$ a fixed
perversity, the cap product map factors through the perversity $\bar
p$ intersection homology groups:
$$H^{dim_{\R}(Y)-*}(Y) \to IH^{\bar p}_*(Y) \to H_{*}(Y).$$
It is therefore natural to ask what homology characteristic classes
of $Y$ admit lifts to intersection homology.
In the case of the topological $L_*$-classes this is not obvious, and discussed
in \cite[(6.2)]{CS2} based on their mapping theorem for these $L_*$-classes.\\

But for a complex algebraic variety $Z$, the MacPherson-Chern class
transformation $c_*$ and the Baum-Fulton-MacPherson Todd class
transformation $td_*$ factorize through the (rationalized) Chow
group $CH_*(Z)_{\Q}$ of $Z$ (cf. \cite{Ke,F}). So the same applies
to the Hirzebruch class transformation $MHT_{y}$ (specialized at any
value of $y$, compare \cite{BSY,SY}). And by a deep result from
\cite{BB,W} (compare also with the more recent \cite{HS}), the image
of the fundamental class map:
$$cl: CH_i(Z)_{\Q}\to H_{2i}(Z;\Q)$$
can be lifted (in general non-uniquely) to the middle intersection homology, i.e.,
$$im(cl: CH_i(Z)_{\Q}\to H_{2i}(Z;\Q))\subset
im(IH^{\bar m}_{2i}(Z;\Q) \to H_{2i}(Z;\Q)) \:.$$

As a corollary, we obtain the following result
\begin{theorem} Let $Z$ be a complete complex algebraic
variety. Then for any rational value $y=a \in \Q$ of the parameter
$y$ the $i$-th piece of the Hirzebruch homology class ${T_a}_*(Z)$,
and for $Z$ pure-dimensional also of the homology class
${IT_a}_*(Z)$, is in the image of the natural map
$$IH^{\bar m}_{2i}(Z;\Q) \to H_{2i}(Z;\Q).$$
\end{theorem}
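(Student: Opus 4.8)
The plan is to combine three ingredients already recalled in this section: (i) the Hirzebruch class transformation $MHT_y$ factors through the rationalized Chow group $CH_*(Z)_{\Q}$; (ii) the cycle class map $cl\colon CH_i(Z)_{\Q}\to H_{2i}(Z;\Q)$ has image contained in the image of $IH^{\bar m}_{2i}(Z;\Q)\to H_{2i}(Z;\Q)$, by the results of Barthel--Brasselet--Fieseler--Kaup and Weber; and (iii) for a rational value $y=a$ the class ${T_a}_*(Z)$ is well-defined in $H^{BM}_{2*}(Z;\Q)=H_{2*}(Z;\Q)$ since $Z$ is complete. The statement is then essentially immediate once these three facts are put together, so the "proof" is really a matter of assembling them cleanly.

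\begin{proof}
Since $Z$ is complete, $H^{BM}_{2i}(Z;\Q)=H_{2i}(Z;\Q)$, and the denominators $(1+y)^{-1}$ cause no difficulty after specializing $y$ to a rational number $a$ (for the excluded value $a=-1$ one uses instead that $c_*$ lands in $CH_*$; in all cases $MHT_a$ takes values in $H_{2*}(Z;\Q)$). As recalled above, the MacPherson--Chern transformation $c_*$ and the Baum--Fulton--MacPherson Todd transformation $td_*$ both factor through the rationalized Chow group $CH_*(Z)_{\Q}$ (cf. \cite{Ke,F}), hence so does $MHT_y$ specialized at $y=a$, by its very construction via $td_{(1+y)}\circ MHC_*$ (compare \cite{BSY,SY}). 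Thus there is a class $\widetilde{T_a}(Z)\in CH_*(Z)_{\Q}$ with $cl(\widetilde{T_a}(Z))={T_a}_*(Z)$, and likewise a class $\widetilde{IT_a}(Z)\in CH_*(Z)_{\Q}$ with $cl(\widetilde{IT_a}(Z))={IT_a}_*(Z)$ when $Z$ is pure-dimensional. Writing $\widetilde{T_a}(Z)=\sum_i \widetilde{T_a}(Z)_i$ with $\widetilde{T_a}(Z)_i\in CH_i(Z)_{\Q}$, the $i$-th piece ${T_a}_*(Z)_i$ equals $cl(\widetilde{T_a}(Z)_i)$, which lies in $im\big(cl\colon CH_i(Z)_{\Q}\to H_{2i}(Z;\Q)\big)$.

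By the result of \cite{BB,W} (see also \cite{HS}), this image is contained in $im\big(IH^{\bar m}_{2i}(Z;\Q)\to H_{2i}(Z;\Q)\big)$. Hence ${T_a}_*(Z)_i$, and similarly ${IT_a}_*(Z)_i$ in the pure-dimensional case, lifts to $IH^{\bar m}_{2i}(Z;\Q)$, as claimed.
\end{proof}

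I expect no genuine obstacle here; the only point requiring a word of care is the behavior of the $(1+y)^{-1}$-denominators and the limiting value $y=-1$, and the observation that the factorization through $CH_*(Z)_{\Q}$ is compatible with the homological degree filtration, so that the decomposition into $i$-th pieces may be performed already on the Chow level. Everything else is a direct citation of the three quoted facts.
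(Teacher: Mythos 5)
Your proof is correct and follows essentially the same route as the paper, which derives the theorem as an immediate corollary of the two facts it has just recalled: the factorization of $MHT_y$ (specialized at any value of $y$) through the rationalized Chow group $CH_*(Z)_{\Q}$, and the Barthel--Brasselet--Fieseler--Kaup/Weber result that the image of the cycle class map lifts to middle intersection homology. Your added remarks on the $(1+y)^{-1}$-denominators and the value $a=-1$ are a reasonable (and correct) bit of extra care that the paper leaves implicit.
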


\begin{remark}\rm The conjectured equality  ${IT_{-1}}_*(Z)=L_*(Z)$ would imply
that the $L$-class $L_*(Z)$ of the pure-dimensional compact complex
algebraic variety $Z$ has a canonical lift to (rationalized) Chow
groups $CH_*(Z)_{\Q}$, and therefore also (non-canonically) to
middle intersection homology $IH^{\bar m}_{2*}(Z;\Q)$.
\end{remark}

\section{The contribution of monodromy. Atiyah-Meyer type
formulae.}\label{monAM}

If we drop the assumption of trivial monodromy along the strata in a
stratification of a proper algebraic morphism, then the right hand
side of the formulae in the previous section should be written in
terms of twisted intersection homology genera and respectively
twisted Hirzebruch characteristic classes. Indeed, for any complex
algebraic variety $Z$ we have the identification
\begin{equation}K_0(\mh(Z))=K_0(\po(Z)^p),\end{equation}
where $\po(Z)^p$ denotes the abelian category of pure polarizable
Hodge modules. And by the decomposition by strict support, it
follows that $K_0(\po(Z)^p)$ is generated by elements of the form
$[IC^H_S(\LL)]$, for $S$ an irreducible closed subvariety of $Z$ and
$\LL$ a polarizable variation of Hodge structures (admissible at
infinity) defined on a smooth Zariski open and dense subset of $S$.
Thus the image of the natural transformation $MHT_y$ is generated by
twisted characteristic classes ${IT_y}_*(S;\LL)$, for $S$ and $\LL$
as before. It is therefore natural to look for Atiyah-Meyer type
formulae for the twisted Hirzebruch classes.

\bigskip

The central result of this section is the following Meyer-type
formula for twisted Hirzebruch classes of algebraic manifolds (see
\cite{CLMS} for complete details), whose proof is included here for
the sake of completeness:
\begin{theorem}(\cite{CLMS})\label{Meyerc}
Let $Z$ be a complex algebraic manifold of pure dimension $n$, and
$\LL$ an admissible variation of mixed Hodge structures on $Z$ with
associated flat bundle with Hodge filtration $(\VV,
\mathcal{F}^{\bullet})$. Then
\begin{equation}\label{WMc}
{T_y}_*(Z;\LL)= \left( ch^*_{(1+y)}(\chi_y(\VV)) \cup T_y^*(T_Z)
\right) \cap [Z]= ch^*_{(1+y)}(\chi_y(\VV)) \cap {T_y}_*(Z),
\end{equation}
where
$$\chi_y(\VV):=\sum_p \left[Gr^{p}_{\mathcal{F}} \VV \right]
\cdot (-y)^{p} \in K^0(Z)[y,y^{-1}]$$
is the $K$-theory $\chi_y$-characteristic of $\VV$ (with $K^0(Z)$ the Grothendieck
group of algebraic vector bundles on $Z$),
and $ch^*_{(1+y)}$ is the twisted Chern character defined in Section
\ref{Hir}.
\end{theorem}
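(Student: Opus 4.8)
The strategy is to reduce the statement to the generalized Hirzebruch--Riemann--Roch computation of Section~\ref{Hir} by unwinding the definition of $MHT_y$ on the smooth mixed Hodge module $\LL^H[n]$. First I would recall that, for the admissible variation $\LL$ with underlying flat bundle $(\VV,\mathcal{F}^\bullet)$ on the smooth $n$-dimensional variety $Z$, Saito's functor $gr^F_{-p}DR$ applied to $\LL^H[n]$ produces, up to shift, the Koszul-type complex built from the Hodge filtration: concretely $gr^F_{-p}DR(\LL^H[n])$ is quasi-isomorphic to a complex whose associated class in $G_0(Z)$ is $\sum_q [Gr^q_{\mathcal{F}}\VV \otimes \Omega^{p-q}_Z]$ (this is the precise analogue of the identity $gr^F_{-p}DR(\Q_Z^H)\simeq\Omega^p_Z[-p]$ quoted in Section~\ref{singular}, now twisted by the locally free sheaf $\VV$). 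Feeding this into the definition \eqref{grF} of $MHC_*$ gives
$$
MHC_*([\LL^H[n]]) = (-1)^n \sum_{p,q} [Gr^q_{\mathcal{F}}\VV]\cdot[\Omega^{p-q}_Z]\cdot(-y)^p
= (-1)^n \left(\sum_q [Gr^q_{\mathcal{F}}\VV](-y)^q\right)\cdot\left(\sum_r[\Omega^r_Z](-y)^r\right)
$$
in $G_0(Z)\otimes\Z[y,y^{-1}]$, where the sign $(-1)^n$ comes from the shift via \eqref{i2}. Since $[\Q_Z^H[n]]=(-1)^n[\Q_Z^H]$ and $MHC_*([\Q_Z^H])$ is exactly $\sum_r[\Omega^r_Z](-y)^r$ by the same computation with $\VV$ trivial, this already exhibits the multiplicativity $MHC_*([\LL^H[n]]) = (-1)^n\,\chi_y(\VV)\cdot MHC_*([\Q_Z^H])$ at the level of $G_0(Z)$-classes, where $\chi_y(\VV)=\sum_q[Gr^q_{\mathcal{F}}\VV](-y)^q$ is multiplication by a class in $K^0(Z)[y,y^{-1}]$ acting on $G_0(Z)$ via the projection-formula pairing.

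Next I would transport this through the Todd transformation $td_{(1+y)}$ of \eqref{td}. The key input is the module property of the Baum--Fulton--MacPherson Todd class over $K^0(Z)$: for a vector bundle $E$ and coherent sheaf $\mathcal{F}$ one has $td_*([E\otimes\mathcal{F}]) = ch^*(E)\cap td_*([\mathcal{F}])$. Incorporating the $(1+y)$-rescaling of homological degrees built into $td_{(1+y)}$ converts the ordinary Chern character $ch^*$ into the twisted character $ch^*_{(1+y)}$ precisely when the bundle is tracked by the $\chi_y$-bookkeeping variable; this is exactly the mechanism by which $ch^*_{(1+y)}(\Xi)$ appears in \eqref{gHRR}. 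Applying $td_{(1+y)}$ to the displayed identity for $MHC_*$ and using the normalization property ${T_y}_*(Z)=T_y^*(T_Z)\cap[Z]$ from Section~\ref{singular} (which identifies $td_{(1+y)}(MHC_*([\Q_Z^H]))$ with $T_y^*(T_Z)\cap[Z]$), one obtains
$$
{T_y}_*(Z;\LL)=MHT_y([\LL^H]) = ch^*_{(1+y)}(\chi_y(\VV))\cap\left(T_y^*(T_Z)\cap[Z]\right) = ch^*_{(1+y)}(\chi_y(\VV))\cap{T_y}_*(Z),
$$
which is the asserted formula; the first equality in \eqref{WMc} then follows by rewriting the cap product as $\left(ch^*_{(1+y)}(\chi_y(\VV))\cup T_y^*(T_Z)\right)\cap[Z]$.

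The main obstacle is the first step: establishing cleanly that $gr^F_{-p}DR(\LL^H[n])$ has the claimed class $(-1)^n\sum_q[Gr^q_{\mathcal{F}}\VV\otimes\Omega^{p-q}_Z]$ in the Grothendieck group. This requires knowing the explicit shape of the de Rham complex of the filtered $\mathcal{D}$-module underlying a smooth mixed Hodge module --- namely that the filtered $\mathcal{D}_Z$-module attached to an admissible VMHS is $(\mathcal{V}:=\mathcal{O}_Z\otimes\VV, \nabla)$ with the Hodge filtration $F_p\mathcal{V}$ given by $\sum_i F_i\mathcal{D}_Z\cdot\mathcal{F}^{-p+i}$ (Griffiths transversality guarantees this interacts correctly with $\nabla$), so that $gr^F DR$ is a Koszul complex $gr^F_\bullet\mathcal{V}\to gr^F_{\bullet}\mathcal{V}\otimes\Omega^1_Z\to\cdots$ whose graded pieces are the tensor products above. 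At the level of $G_0$-classes the differentials drop out and one is left with the alternating sum $\sum_q[Gr^q_{\mathcal{F}}\VV\otimes\Omega^{p-q}_Z]$; the sign accounting between the $\mathcal{D}$-module filtration convention $F^*=F_{-*}$ and the cohomological shift $[n]$ must be handled with care, but once the complex is identified everything else is a formal consequence of the module properties of $td_*$ and the normalization property of $MHT_y$ already available in the excerpt. In particular, pushing the identity to a point and using Example~\ref{pt} recovers the genus-level formula $\chi_y(Z;\LL)=\langle ch^*_{(1+y)}(\chi_y(\VV))\cup T_y^*(T_Z),[Z]\rangle$ as a consistency check.
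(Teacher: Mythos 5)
Your proposal follows essentially the same route as the paper's proof: identify $gr^F DR$ of the smooth mixed Hodge module with the graded pieces of the Griffiths-transversal filtered de Rham complex $\Omega_Z^{\bullet}(\mathcal{V})$, deduce the factorization $MHC_*([\LL^H]) = \chi_y(\VV)\otimes\lambda_y(T^*_Z)$, apply the Todd class transformation via Riemann--Roch on the smooth $Z$, and convert $ch^*$ and the un-normalized class into $ch^*_{(1+y)}$ and $T_y^*$ by the $(1+y)$-degree rescaling. The only slip is in your displayed intermediate formula: the $G_0$-class of the complex $gr_F^{p}\Omega_Z^{\bullet}(\mathcal{V})$ must carry the alternating signs $(-1)^{p-q}$ on the terms $Gr^q_{\mathcal{F}}\VV\otimes\Omega^{p-q}_Z$, so the second factor of your product is $\lambda_y(T^*_Z)=\sum_r[\Omega^r_Z]\,y^r$ rather than $\sum_r[\Omega^r_Z](-y)^r$ --- precisely the sign bookkeeping you flag as needing care.
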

\begin{proof}
Let $\mathcal{V}:=\LL \otimes_{\Q} \mathcal{O}_Z$ be the flat bundle
with holomorphic connection $\bigtriangledown$, whose sheaf of
horizontal sections is $\LL \otimes \C$. The bundle $\VV$ comes
equipped with its Hodge (decreasing) filtration by holomorphic
sub-bundles $\FC^p$, and these are required to satisfy the
Griffiths' transversality condition $$\bigtriangledown(\FC^p)
\subset \Omega^1_Z \otimes \FC^{p-1}.$$ The bundle $\VV$ becomes a
holonomic $D$-module bifiltered by
$$W_k\VV:=W_k\LL \otimes_{\Q} \mathcal{O}_Z,$$
$$F_p\VV:=\FC^{-p}\VV.$$
This data constitutes the smooth mixed Hodge module $\LL^H[n]$. It
follows from Saito's work that there is a filtered quasi-isomorphism
between $(DR(\LL^H),F_{-{\bullet}})$ and the usual filtered de Rham
complex $(\Omega_Z^{\bullet}(\mathcal{V}), F^{\bullet})$ with the
filtration induced by Griffiths' transversality, that is,
$$F^p \Omega_Z^{\bullet}(\mathcal{V}):= \left[ \mathcal{F}^p
\overset{\bigtriangledown}{\to} \Omega_Z^1 \otimes \mathcal{F}^{p-1}
\overset{\bigtriangledown}{\to} \cdots
\overset{\bigtriangledown}{\to} \Omega_Z^i \otimes \mathcal{F}^{p-i}
\overset{\bigtriangledown}{\to} \cdots \right].$$ Therefore,
{\allowdisplaybreaks
\begin{eqnarray*}
MHC_*([\LL^H]) &=& \sum_{p,i} (-1)^{i} [\HC^i ( gr^F_{-p}
DR(\LL^H) )]  \cdot (-y)^p \\ &=& \sum_{p,i} (-1)^{i} [\HC^i (
gr_F^{p} \Omega_Z^{\bullet}(\mathcal{V}) )]  \cdot (-y)^p \\ &=&
\sum_{p,i} (-1)^{i} [\Omega_Z^{i} \otimes
Gr_{\mathcal{F}}^{p-i}\mathcal{V}] \cdot (-y)^p \\
&=& \chi_y(\VV) \otimes \lambda_y(T^*_Z) \in G_0(Z) \otimes
\Z[y,y^{-1}],
\end{eqnarray*} }
where $\lambda_y(T^*_Z):=\sum_p \Lambda^pT^*_Z\cdot y^p$ the total
$\lambda$-class of $Z$. Since $Z$ is an algebraic manifold, the Todd
class transformation of the classical Grothendieck-Riemann-Roch
theorem is explicitly described by \footnote{This formula is the
counterpart of the Atiyah-Meyer formula in the coherent context of
the Todd-class transformation of Baum-Fulton-MacPherson
(\cite{BFM}). More generally, the counterpart of the
Banagl-Cappell-Shaneson formula (\ref{BCS}) in the coherent context
is $td_*(\mathcal{G})=ch^*([\mathcal{G}]) \cap td_*(Z)$, for a
locally free coherent sheaf $\mathcal{G}$ on the singular algebraic
variety $Z$.}
$$td_*(\cdot)=ch^*(\cdot)td^*(Z) \cap [Z].$$ Therefore, by applying
$td_*$ (which is linearly extended over $\Z[y,y^{-1}]$) to the above
equation, we have that
\begin{equation}\label{i4}
td_* \left( MHC_*([\LL^H]) \right)=\left(
ch^*(\chi_y(\mathcal{V})) \cup \tilde{T}_y^*(T_Z) \right) \cap [Z],
\end{equation}
where $\tilde{T}_y^*(T_Z):=ch^*(\lambda_y(T^*_Z)) \cup td^*(Z)$ is
the un-normalized Hirzebruch class (in cohomology). The claimed
formula (\ref{WMc}) follows now from the definition of $td_{(1+y)}$,
by noting that the identities
$$ch^*_{(1+y)}(\cdot)_{2k}=(1+y)^k
\cdot ch^*(\cdot)_{2k}, \quad \text{and} \quad  T_y^k(T_Z)=(1+y)^{k-n} \cdot
\tilde{T}^k_y(T_Z)$$
hold in $H^{2k}(Z) \otimes \Q[y]$. Indeed, we
have in $H_{2k}^{BM}(Z) \otimes \Q[y,y^{-1}]$ the following sequence
of equalities
\begin{eqnarray*}
&& td_k \left( MHC_*([\LL^H]) \right) = \left(
ch^*(\chi_y(\mathcal{V})) \cup \tilde{T}_y^*(T_Z) \right)^{2(n-k)}
\cap [Z]\\ && \qquad= \left( \sum_{i+j=n-k} ch^*(\chi_y(\mathcal{V}))_{2i}
\cup \tilde{T}_y^j(T_Z) \right) \cap [Z] \\ && \qquad= \left(
\sum_{i+j=n-k} (1+y)^{-i} ch^*_{(1+y)}(\chi_y(\mathcal{V}))_{2i}
\cup (1+y)^{n-j} T_y^j(T_Z) \right) \cap [Z]\\ && \qquad= (1+y)^k \left(
ch^*_{(1+y)}(\chi_y(\mathcal{V})) \cup T_y^*(T_Z) \right)^{2(n-k)}
\cap [Z].
\end{eqnarray*}

\end{proof}

\begin{cor}
If the variety $Z$ in Theorem \ref{Meyerc} is also \emph{complete},
then by pushing down to a point, we obtain a Hodge theoretic
Meyer-type formula for the twisted $\chi_y$-genus:
\begin{equation}\label{gWM}
\chi_y(Z;\LL)= \langle ch^*_{(1+y)}(\chi_y(\VV)) \cup T_y^*(T_Z),
[Z] \rangle.
\end{equation}
\end{cor}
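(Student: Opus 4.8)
The plan is to simply push the identity (\ref{WMc}) of Theorem \ref{Meyerc} forward along the constant map $k \colon Z \to pt$, which is proper because $Z$ is complete. The left-hand side is handled using the functoriality built into the motivic Hirzebruch transformation: since $MHT_y$ commutes with proper push-forward and $k_*[\LL^H] = [H^*(Z;\LL)] \in K_0(\mh(pt))$ (by the very definition of the cohomological mixed Hodge structure on a variation, using Saito's comparison theorem \cite{Sa5}), Example \ref{pt} gives
\begin{equation*}
k_* {T_y}_*(Z;\LL) = k_* MHT_y([\LL^H]) = MHT_y(k_*[\LL^H]) = \chi_y([H^*(Z;\LL)]) =: \chi_y(Z;\LL),
\end{equation*}
which is exactly the twisted $\chi_y$-genus as recalled in Section \ref{singular}.

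For the right-hand side I would use the standard fact that pushing a cap product down to a point computes the evaluation (Kronecker) pairing against the fundamental class: for any cohomology class $\alpha \in H^*(Z) \otimes \Q[y,y^{-1},(1+y)^{-1}]$ one has $k_*(\alpha \cap [Z]) = \langle \alpha, [Z] \rangle \in \Q[y,y^{-1},(1+y)^{-1}]$, where only the top-degree component of $\alpha$ contributes. Applying this to $\alpha = ch^*_{(1+y)}(\chi_y(\VV)) \cup T_y^*(T_Z)$ and combining with the computation of the left-hand side yields
\begin{equation*}
\chi_y(Z;\LL) = \langle ch^*_{(1+y)}(\chi_y(\VV)) \cup T_y^*(T_Z), [Z] \rangle,
\end{equation*}
which is (\ref{gWM}).

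There is essentially no obstacle here: the corollary is a formal consequence of Theorem \ref{Meyerc} together with the compatibility of $MHT_y$ with proper push-forward and its normalization over a point (Example \ref{pt}). The only point requiring a word of care is the identification of $k_* {T_y}_*(Z;\LL)$ with $\chi_y(Z;\LL)$, i.e.\ that Saito's cohomological mixed Hodge structure on $H^*(Z;\LL)$ agrees with the classical one so that the resulting polynomial is the expected twisted genus; this is precisely the content of \cite{Sa5}, already invoked in Section \ref{SMP}. Since all the characteristic classes involved lie in $H^{BM}_{2*}(Z) \otimes \Q[y,y^{-1},(1+y)^{-1}]$ and $Z$ is compact, Borel-Moore homology coincides with ordinary homology and the evaluation against $[Z]$ is well defined, so no finiteness or convergence issues arise.
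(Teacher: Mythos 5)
Your proposal is correct and is exactly the argument the paper intends: the corollary carries no separate proof there precisely because it is the push-forward of (\ref{WMc}) along $k\colon Z\to pt$, using that $MHT_y$ commutes with proper push-down, that $k_*(\alpha\cap[Z])=\langle\alpha,[Z]\rangle$, and that $k_*{T_y}_*(Z;\LL)=\chi_y(Z;\LL)$ (which the paper essentially takes as the definition of the twisted genus in Section 2.4). No issues.
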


\begin{remark}\rm Assume that the local system $\LL$ underlies a
polarizable variation of pure Hodge structures of weight $i$ on $Z$.
Then the choice of such a polarization defines after identifying the
Tate twists $\Q_Z(i)\simeq \Q_Z$ a suitable duality structure on $\LL$, i.e.
makes it a Poincar\'e local system. Then it is easy to see that the
image of $\chi_1(\VV)$ under the natural map
$$can: K^0(Z)\to KU(Z) \to KU(Z)[1/2] \supset KO(Z)[1/2]$$
agrees with the $K$-theory signature $[\LL]_K$ of this Poincar\'e local system.
So this class
$$can(\chi_1(\VV))\in KU(Z)[1/2]$$
does not depend on the choice of the polarization. In the same way
one also gets for $Z$ projective the equality
$$ \chi_1(Z;\LL)=\sigma(Z;\LL)\:,$$
so that in this case the formula (\ref{gWM}) exactly specializes for $y=1$ to
Meyer's signature formula (\ref{M}). Recall that $T_1^*(T_Z)=L^*(T_Z)$
for $Z$ smooth.

Similarly, for any variation of mixed Hodge structures one gets by
definition that
$$\chi_{-1}(\VV)=[\VV]\in K^0(Z)\quad \text{and} \quad
ch^*_{(0)}(\chi_{-1}(\VV))={\mbox{rk}}(\VV)={\mbox{rk}}(\LL)\in H^0(Z;\Q) \:.$$ So the
formula (\ref{gWM}) specializes for $y=-1$ to the well-known formula
for the  Euler characteristic of $Z$ with coefficients in $\LL$:
$$\chi(H^*(Z;\LL))= {\mbox{rk}}(\LL)\cdot \chi(H^*(Z;\Q))=
{\mbox{rk}}(\LL)\cdot \langle c^*(T_Z),[Z] \rangle \:.$$
\end{remark}

\begin{remark}\label{nonc}\rm Without the compactness assumption on $Z$, we can obtain
directly a formula for $\chi_y(Z;\LL)$ by noting that the twisted
logarithmic de Rham complex $\Omega_X^{\bullet}(\log D) \otimes
\bar{\VV}$ associated to the Deligne extension of $\LL$ on a good
compactification $(X,D)$ of $Z$ (with $X$ smooth and compact, and
$D$ a simple normal crossing divisor), with its Hodge filtration
induced by Griffiths' transversality, is part of a cohomological
mixed Hodge complex that calculates $H^*(Z; \LL \otimes \C)$. In the
above notation, we then obtain (cf. \cite{CLMS}, Theorem 4.10):
\begin{equation}\label{nc}
\chi_y(Z;\LL)=\langle ch^* (\chi_y(\bar{\VV})) \cup ch^* \left(
\lambda_y(\Omega_X^{1}({log}D)) \right) \cup td^*(X), [X] \rangle.
\end{equation}
Here $\langle , \rangle$ denotes the Kronecker pairing on $X$,
$td^*(X):=td^*(T_X)$ is the total Todd class of $X$ (in cohomology),
$$\lambda_y \left( \Omega_X^{1}({\log}D) \right):=\sum_i
\Omega_X^{i}({\log}D) \cdot y^i, \quad \text{and} \quad
\chi_y(\bar{\VV})=\sum_p \left[ Gr^{p}_{\bar{\mathcal{F}}}
\bar{\mathcal{\VV}} \right] \cdot (-y)^{p}\:,$$ with $(\bar{\VV},
\bar{\mathcal{F}^{\bullet}})$ the unique extension of
$(\VV,\mathcal{F}^{\bullet})$ to $X$ corresponding to the Deligne
extension of $\LL$ (cf. \cite{De}).

For future reference, we mention here a different way of proving
formula (\ref{nc}). Under the above notations and for $j:Z
\hookrightarrow X$ the inclusion map, Saito's work implies that
there is a filtered quasi-isomorphism between
$(DR(j_*\LL^H),F_{-{\bullet}})$ and the usual filtered logarithmic
de Rham complex of $(\bar{\VV}, \bar{\mathcal{F}^{\bullet}})$. Then,
as in the proof of Theorem \ref{Meyerc}, it follows that
\begin{equation}\label{nc2}
MHC_*([j_*\LL^H])=\chi_y(\bar{\VV}) \otimes \lambda_y \left(
\Omega_X^{1}(\log D) \right)
\end{equation}
(Note that all coherent sheaves appearing in the above formula are
locally free). Therefore, by applying the transformation $td_*$
(which is linearly extended over $\Z[y,y^{-1}]$) to the above
equation, we have that
\begin{equation}\label{nc3}
td_* \left( MHC_*([j_*\LL^H]) \right)= \left(
ch^*(\chi_y(\bar{\VV})) \cup ch^* \left( \lambda_y(\Omega_X^{1}(\log
D)) \right) \cup td^*(X) \right) \cap [X].
\end{equation}
Formula (\ref{nc}) can be now obtained by pushing  (\ref{nc3}) down
to a point via the constant map $k:X \to pt$, and by using an
argument similar to that of [\cite{CLMS}, Proposition 5.4].
\end{remark}

In the relative setting, as an application of Theorem \ref{Meyerc}
we obtain the following Atiyah-type result:
\begin{theorem}(\cite{CLMS})\label{gAMc} Let $f:E \to B$ be a projective morphism of complex
algebraic varieties, with $B$ smooth and connected. Assume that the
sheaves $R^sf_*\Q_E$, $s \in \Z$, are locally constant on $B$, e.g.,
$f$ is a locally trivial topological fibration. Then
\begin{equation}\label{gtwistedc}
f_*{T_y}_*(E)= ch^*_{(1+y)}\left( \chi_y (f) \right) \cap
{T_y}_*(B),
\end{equation}
where
$$\chi_y(f):=\sum_{i,p} (-1)^i \left[ Gr^p_{\mathcal{F}} \HC_i
\right] \cdot (-y)^p \in K^0(B)[y]$$
is the $K$-theory
$\chi_y$-characteristic of $f$, for $\HC_i$ the flat bundle with
connection $\bigtriangledown_i: \HC_i \to \HC_i
\otimes_{\mathcal{O}_B} \Omega^1_B$, whose sheaf of horizontal
sections is  $R^if_*\C_E$.

If, moreover, $B$ is complete, then by pushing down to a point, we
obtain:
\begin{equation}\label{gtw}
\chi_y(E)= \langle ch^*_{(1+y)}\left( \chi_y (f) \right) \cup
T_y^*(T_B), [B] \rangle.
\end{equation}
\end{theorem}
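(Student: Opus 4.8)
The plan is to reduce the statement to the absolute (Meyer-type) formula of Theorem \ref{Meyerc} by exhibiting the local systems $R^if_*\C_E$ as underlying admissible variations of mixed Hodge structures, and then using the compatibility of $MHT_y$ with proper push-forward together with the additivity over the Grothendieck group. Concretely, since $f$ is projective and $B$ is smooth, Saito's theory gives that $f_*\Q_E^H \in D^b\mh(B)$, and the hypothesis that the sheaves $R^sf_*\Q_E$ are locally constant on $B$ means (combined with the $t$-exactness of $rat$ relative to the usual $t$-structure $\tau'_{\le}$ compatible with the standard $t$-structure on $D^b_c(B)$) that each cohomology object $H^i(f_*\Q_E^H)$ is a \emph{smooth} mixed Hodge module on $B$, i.e. of the form $\mathcal{H}_i^H[n]$ for an admissible variation of mixed Hodge structures whose underlying local system is $R^if_*\Q_E$ and whose associated flat bundle with Hodge filtration is $(\HC_i,\mathcal{F}^\bullet)$. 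This is exactly the place where the local-constancy hypothesis is used, and is morally the statement that the Leray spectral sequence of $f$ degenerates in the category of mixed Hodge modules; I expect this to be the main obstacle to write cleanly, and the reference \cite{CLMS} should be cited for the details.

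Granting that, the rest is formal. In $K_0(\mh(B))$ we have, by the identification \eqref{i1} applied to $f_*\Q_E^H$,
\[
f_*[\Q_E^H] \;=\; \sum_i (-1)^i [H^i(f_*\Q_E^H)] \;=\; \sum_i (-1)^i [\mathcal{H}_i^H[n]] \;=\; (-1)^n\sum_i (-1)^i\,(-1)^n[\mathcal{H}_i^H[n]],
\]
so that applying the additive transformation $MHT_y$ and using that it commutes with the proper push-forward $f_*$ (Definition \ref{d1} and the normalization property following it) yields
\[
f_*{T_y}_*(E) \;=\; f_*MHT_y([\Q_E^H]) \;=\; MHT_y\!\left(f_*[\Q_E^H]\right) \;=\; \sum_i (-1)^i\, MHT_y([\mathcal{H}_i^H[n]]) \;=\; \sum_i (-1)^i\,{T_y}_*(B;R^if_*\Q_E),
\]
where I use the notation of \eqref{tHc} for the twisted Hirzebruch class attached to the variation with flat bundle $\HC_i$.

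Now I apply Theorem \ref{Meyerc} to each summand: ${T_y}_*(B;R^if_*\Q_E)=ch^*_{(1+y)}(\chi_y(\HC_i))\cap {T_y}_*(B)$, where $\chi_y(\HC_i)=\sum_p[Gr^p_{\mathcal{F}}\HC_i]\cdot(-y)^p\in K^0(B)[y,y^{-1}]$. Summing over $i$ with the signs $(-1)^i$ and using $\Q$-linearity of the cap product and of $ch^*_{(1+y)}$, the alternating sum of the $\chi_y(\HC_i)$ is precisely $\chi_y(f)=\sum_{i,p}(-1)^i[Gr^p_{\mathcal{F}}\HC_i]\cdot(-y)^p\in K^0(B)[y]$, which gives \eqref{gtwistedc}. (One checks the power of $y$ is non-negative because each $Gr^p_{\mathcal{F}}\HC_i$ vanishes outside $0\le p\le n+\text{(weight range)}$, exactly as in the proof of Theorem \ref{Meyerc}.) Finally, when $B$ is complete one pushes \eqref{gtwistedc} forward along the constant map $k_B:B\to pt$: since $k_B$ is proper, $MHT_y$ commutes with $k_{B*}$, so $k_{B*}f_*{T_y}_*(E)=\chi_y(E)$ on the left by the definition \eqref{defchi} of the Hodge genus, while on the right $k_{B*}\!\left(ch^*_{(1+y)}(\chi_y(f))\cap{T_y}_*(B)\right)=\langle ch^*_{(1+y)}(\chi_y(f))\cup T_y^*(T_B),[B]\rangle$ by the normalization property ${T_y}_*(B)=T_y^*(T_B)\cap[B]$ for the smooth variety $B$ and the projection formula for the cap product. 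This yields \eqref{gtw} and completes the proof.
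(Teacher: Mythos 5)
Your proof is correct and follows essentially the same route as the paper: decompose $[f_*\Q^H_E]$ in $K_0(\mh(B))$ into its cohomology objects via (\ref{i1})--(\ref{i2}), identify these (up to shift) with the smooth mixed Hodge modules of the admissible variations underlying $R^if_*\Q_E$, commute $MHT_y$ with the proper push-forward, and apply Theorem \ref{Meyerc} termwise. The only blemish is a harmless indexing slip --- $H^i(f_*\Q^H_E)$ has underlying complex $(R^{i-\dim B}f_*\Q_E)[\dim B]$ rather than $(R^{i}f_*\Q_E)[\dim B]$ --- which cancels in the alternating sum exactly as in the paper's equation (\ref{i5}), and your aside about Leray degeneration is unnecessary since only additivity in the Grothendieck group is used.
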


\begin{proof}
If in (\ref{i1}) we let $M^{\bullet}=f_*\Q^H_E$, then by using
(\ref{i2}) we obtain the following identity in $K_0(\mh(B))$:
\begin{equation}\label{i5} \left[ f_*\Q^H_E \right]=\sum_{i \in \Z} (-1)^i [H^i(f_* \Q^H_E)]=\sum_{i \in \Z}
(-1)^i \left[ H^{i+\text{dim}B}(f_* \Q^H_E)[-\text{dim}B]\right].
\end{equation}
Note that $H^{i+\text{dim}B}(f_* \Q^H_E) \in \mh(B)$ is the smooth
mixed Hodge module on $B$ whose underlying rational complex is
(recall that $B$ is smooth)
\begin{equation} rat (H^{i+\text{dim}B}(f_*
\Q^H_E))={^p\HC}^{i+\text{dim}B}(Rf_*\Q_E)=(R^i
f_*\Q_E)[\text{dim} B],
\end{equation}
where ${^p\HC}$ denotes the perverse cohomology functor. In this
case, each of the local systems $\LL_i:=R^{i} f_*\Q_E$ underlies a
geometric (hence admissible) variation of Hodge structures. By
applying the natural transformation $MHT_y$ to the equation
(\ref{i5}), and using the fact that $f$ is proper, we have that
$$f_*{T_y}_*(E)=\sum_i(-1)^i {T_y}_*(B;\LL_i).$$
In view of Theorem \ref{Meyerc} this yields the formula in equation
(\ref{gtwistedc}).

\end{proof}

\begin{remark}\rm If the monodromy action of $\pi_1(B)$ on $H^*(F)$ is
\emph{trivial} (e.g., $\pi_1(B)=0$), i.e., if the local systems
$R^{i} f_*\Q_E$ ($i \in \Z$) are constant on $B$, then by the
``rigidity theorem" (e.g., see the discussion in the last paragraph
of \cite{CMS}, \S 3.1) the underlying variations of mixed Hodge
structures are constant, so that
\begin{equation}ch^*_{(1+y)}\left( \chi_y (f)
\right)=\chi_y(F) \in H^0(B;\Q[y,y^{-1}]).\end{equation} In this case, formula
(\ref{gtw}) yields the multiplicative relation
$$\chi_y(E)=\chi_y(F) \cdot \chi_y(B)\:,$$
thus extending the Chern-Hirzebruch-Serre theorem
(in the context of complex algebraic varieties).
\end{remark}

\bigskip

Theorem \ref{Meyerc} can also be used for computing invariants
arising from intersection homology (cf. Definition \ref{IHinv}). In
the above notations, we have the following
\begin{prop}(\cite{CLMS2})\label{IH} Let $f:E \to B$ be a proper
morphism of complex algebraic varieties, with $E$ pure-dimensional
and $B$ smooth and connected. Assume that $f$ is a locally trivial
topological fibration with fiber $F$. Then
\begin{equation}\label{str}
f_*{IT_y}_*(E)=\sum_i (-1)^{dim F +i} \ {T_y}_*(B;\LL_i),
\end{equation}
where $\LL_i$ is the admissible variation of mixed Hodge structures on
$B$ with stalk $IH^{dim F  +i}(F;\Q)$ and with associated smooth mixed
Hodge module $H^i(f_*IC^H_E) \in \mh(B)$.
\end{prop}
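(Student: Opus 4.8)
The plan is to imitate the proof of Theorem~\ref{gAMc}, replacing the constant Hodge module $\Q^H_E$ by the shifted intersection cohomology Hodge module $IC'^H_E$ (available precisely because $E$ is assumed pure-dimensional). Write $n:={\rm dim}_{\C}E$, $d:={\rm dim}_{\C}B$ and $m:={\rm dim}_{\C}F$, so $n=d+m$ and ``${\rm dim}\,F$'' in the statement is $m$. By Definition~\ref{IHinv} one has ${IT_y}_*(E)=MHT_y([IC'^H_E])$; since $[IC'^H_E]=(-1)^n[IC^H_E]$ in $K_0(\mh(E))$ and $MHT_y$ commutes with the proper push-forward $f_*$, the first step gives
\[
f_*{IT_y}_*(E)=(-1)^n\,MHT_y\big([f_*IC^H_E]\big)\in H^{BM}_{2*}(B)\otimes\Q[y,y^{-1},(1+y)^{-1}].
\]
Applying the identity~(\ref{i1}) to the complex $f_*IC^H_E\in D^b\mh(B)$ yields $[f_*IC^H_E]=\sum_i(-1)^i[H^i(f_*IC^H_E)]$ in $K_0(\mh(B))$, so the problem reduces to identifying each cohomology Hodge module $H^i(f_*IC^H_E)$ and computing $MHT_y$ of its class.

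The heart of the argument is to show that each $H^i(f_*IC^H_E)$ is a \emph{smooth} mixed Hodge module on $B$, namely $H^i(f_*IC^H_E)=\LL_i^H[d]$. Under $rat$ this object is the perverse cohomology sheaf ${}^{p}\HC^{i}(Rf_*IC_E)$; since $f$ is proper and a locally trivial topological fibration, $F$ is compact and, over a small open $U\ni b$ with $f^{-1}(U)\cong U\times F$, the K\"unneth isomorphism $IC_{U\times F}\simeq\Q_U[d]\boxtimes IC_F$ identifies $(Rf_*IC_E)|_U$ with $\Q_U[d]\otimes R\Gamma(F;IC_F)$, so the ordinary cohomology sheaves of $Rf_*IC_E$ are locally constant. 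On the smooth connected variety $B$ a perverse sheaf with locally constant cohomology sheaves is of the form $\LL_i[d]$ for a local system $\LL_i$; by Saito's theory $\LL_i$ underlies an admissible variation of mixed Hodge structures, and proper base change (over the above $U$) identifies its stalk at $b$, with its induced mixed Hodge structure, with $IH^{m+i}(F;\Q)=\HB^{i}(F;IC_F)$. This is exactly the local system appearing in the statement. (Optionally one first invokes Saito's decomposition theorem to see that $f_*IC^H_E$ is pure, hence $\simeq\bigoplus_iH^i(f_*IC^H_E)[-i]$, which makes the identification of the summands transparent; this is convenient but not logically needed.)

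It remains to collect signs. By~(\ref{i2}) one has $[\LL_i^H[d]]=(-1)^d[\LL_i^H]$ in $K_0(\mh(B))$, so using the defining equation ${T_y}_*(B;\LL_i)=MHT_y([\LL_i^H])$ of Definition~\ref{d1} we get $MHT_y([H^i(f_*IC^H_E)])=(-1)^d\,{T_y}_*(B;\LL_i)$, and hence
\[
f_*{IT_y}_*(E)=(-1)^n\sum_i(-1)^i(-1)^d\,{T_y}_*(B;\LL_i)=\sum_i(-1)^{n+d+i}\,{T_y}_*(B;\LL_i).
\]
As $n+d=2d+m\equiv m\pmod{2}$, we have $(-1)^{n+d+i}=(-1)^{m+i}=(-1)^{{\rm dim}\,F+i}$, which is the asserted formula~(\ref{str}). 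Feeding each $\LL_i$ into Theorem~\ref{Meyerc} then upgrades this, if desired, to the explicit Atiyah--Meyer-type expression $f_*{IT_y}_*(E)=\sum_i(-1)^{{\rm dim}\,F+i}\,ch^*_{(1+y)}(\chi_y(\VV_i))\cap{T_y}_*(B)$, with $\VV_i$ the flat bundle underlying $\LL_i$.

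The only genuine obstacle I anticipate is the third paragraph: verifying carefully that the $H^i(f_*IC^H_E)$ are smooth mixed Hodge modules, and matching the stalks of the underlying local systems --- together with all degree shifts and the induced mixed Hodge structures --- with $IH^{{\rm dim}\,F+i}(F;\Q)$. Once this is secured the remainder is the same formal $K_0$-bookkeeping as in the proof of Theorem~\ref{gAMc}; as a check, when the monodromy is trivial the $\LL_i$ are constant and the formula collapses to $f_*{IT_y}_*(E)=I\chi_y(F)\cdot{T_y}_*(B)$, consistent with~(\ref{charformula'}).
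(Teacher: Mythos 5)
Your proposal is correct and follows essentially the same route as the paper's proof: reduce via (\ref{i1})--(\ref{i2}) to the classes $[H^i(f_*IC^H_E)]$, identify these as smooth mixed Hodge modules $\LL_i^H[\dim B]$ with $\LL_i=R^{i-\dim B}f_*IC_E$, compute the stalks by proper base change together with the normal nonsingularity of the fiber inclusion (giving $IC_E|_F\simeq IC_F[\mathrm{codim}\,F]$, which is what your K\"unneth argument over a trivializing neighborhood amounts to), and collect the signs $(-1)^{n+d+i}=(-1)^{\dim F+i}$. The sign bookkeeping and the final identification of the stalk as $IH^{\dim F+i}(F;\Q)$ both match the paper.
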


\begin{proof}
The following equation in $K_0(\mh(B))$ is a consequence of the
identities (\ref{i1}) and (\ref{i2}):
\begin{equation}\label{G} [f_*IC_E^H]=\sum_i(-1)^i \left[ H^i(f_*IC_E^H)
\right]=(-1)^{{\rm dim}(B)} \cdot \sum_i (-1)^i\left[
H^i(f_*IC_E^H)[-{\rm dim}(B)]\right].\end{equation} Note that
$H^i(f_* IC^H_E) \in \mh(B)$ is the smooth mixed Hodge module on $B$
whose underlying rational complex is
\begin{equation} rat (H^i(f_* IC^H_E))={^p\HC}^i(Rf_*IC_E)=(R^{i-\text{dim}B}
f_*IC_E)[\text{dim} B],
\end{equation}
where the second equality above follows since $B$ is smooth (hence smooth
perverse sheaves are, up to a shift, just local systems on $B$). In
particular, each of the local systems $\LL_i:=R^{i-\text{dim}B}
f_*IC_E$ ($i \in \Z$) underlies an admissible variation of mixed
Hodge structures.

By applying the natural transformation $MHT_y$ to the equation
(\ref{G}), and using the fact that $MHT_y$ commutes with $f_*$
(since $f$ is proper), we obtain the formula in equation
(\ref{str}).

It remains to identify the stalks of the local systems $\LL_i$ ($i
\in \Z$). Let $b \in B$ with $i_b:\{b\} \hookrightarrow B$ the
inclusion map. Then $\{f=b\}$ is the (general) fiber $F$ of $f$, so
it is locally normally nonsingular embedded in $E$. It follows that
we have a quasi-isomorphism $IC_E|_F \simeq IC_F[{\rm codim}F]$
(e.g., see \cite{GM2}, \S 5.4.1). Then by proper base change we
obtain that
\begin{eqnarray*}
(\LL_i)_b &=& (R^{i-\text{dim}B}f_*IC_E)_b=\HC^{i-\text{dim}B}(i_b^*Rf_*IC_E)\\
&=& IH^{i-\text{dim}B+\text{dim}E}(F;\Q)=
IH^{i+\text{dim}F}(F;\Q) \:.
\end{eqnarray*}

\end{proof}

Each term in the right hand side of equation (\ref{str}) can be
computed by formula (\ref{WMc}). Let $\VV_i$ be the flat bundle with
connection associated to the admissible variation of mixed Hodge
structures $\LL_i:=R^{i-\text{dim}B} f_*IC_E$, that is $\VV_i:=\LL_i
\otimes_{\Q} \mathcal{O}_B$. Recall that this comes equipped with a
filtration by holomorphic sub-bundles satisfying Griffiths'
transversality. Define the $I\chi_y$-characteristic of $f$ by
\begin{equation}I\chi_y(f):=\sum_i (-1)^{i+\text{dim}F}\cdot
\chi_y(\VV_i).\end{equation} Then as a consequence of (\ref{WMc}),
the above proposition yields the following
\begin{cor} Under the notations and assumptions of Propositions
\ref{IH}, we obtain
\begin{equation}\label{IHm}
f_*{IT_y}_*(E)=ch^*_{(1+y)}(I\chi_y(f)) \cap {T_y}_*(B).
\end{equation}
In particular, if $\pi_1(B)=0$, then $f_*{IT_y}_*(E)=I\chi_y(F)
\cdot {T_y}_*(B)$. \end{cor}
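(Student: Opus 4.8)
The plan is to combine Proposition~\ref{IH} with the Meyer-type formula of Theorem~\ref{Meyerc}, using only that the twisted Chern character $ch^*_{(1+y)}$ is additive and that the cap product is bilinear over the coefficient ring. Proposition~\ref{IH} gives
$$f_*{IT_y}_*(E)=\sum_i (-1)^{\dim F+i}\,{T_y}_*(B;\LL_i),$$
where $\LL_i=R^{i-\dim B}f_*IC_E$ underlies an admissible variation of mixed Hodge structures with stalk $IH^{\dim F+i}(F;\Q)$, and $\VV_i=\LL_i\otimes_{\Q}\mathcal{O}_B$ is the associated flat bundle. Since $B$ is smooth, Theorem~\ref{Meyerc} applies to each summand and yields ${T_y}_*(B;\LL_i)=ch^*_{(1+y)}(\chi_y(\VV_i))\cap {T_y}_*(B)$.

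Next I would assemble these terms. The twisted Chern character $ch^*_{(1+y)}$ is additive on $K^0(B)$ (being built from the ordinary Chern character) and $\Z[y,y^{-1}]$-linear on $K^0(B)[y,y^{-1}]$, while the cap product of cohomology classes against ${T_y}_*(B)\in H^{BM}_{2*}(B)\otimes\Q[y,y^{-1},(1+y)^{-1}]$ is bilinear over $\Q[y,y^{-1},(1+y)^{-1}]$. Hence
\begin{align*}
f_*{IT_y}_*(E)&=\sum_i(-1)^{\dim F+i}\,ch^*_{(1+y)}(\chi_y(\VV_i))\cap {T_y}_*(B)\\
&=\Big(\sum_i(-1)^{\dim F+i}\,ch^*_{(1+y)}(\chi_y(\VV_i))\Big)\cap {T_y}_*(B)\\
&=ch^*_{(1+y)}\Big(\sum_i(-1)^{\dim F+i}\chi_y(\VV_i)\Big)\cap {T_y}_*(B),
\end{align*}
and the argument of $ch^*_{(1+y)}$ is by definition $I\chi_y(f)$. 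This establishes the first formula.

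For the case $\pi_1(B)=0$, the local systems $\LL_i$ are constant, so by the rigidity theorem (a constant underlying local system forces the admissible variation of mixed Hodge structures itself to be constant; compare \cite{CMS}, \S 3.1) each $\VV_i\cong IH^{\dim F+i}(F;\C)\otimes_{\C}\mathcal{O}_B$ is trivial with its constant Hodge filtration. Then $\chi_y(\VV_i)=\chi_y([IH^{\dim F+i}(F;\Q)])\cdot[\mathcal{O}_B]$, and re-indexing by $j=\dim F+i$ gives $I\chi_y(f)=\sum_j(-1)^j\chi_y([IH^j(F;\Q)])\cdot[\mathcal{O}_B]=I\chi_y(F)\cdot[\mathcal{O}_B]$. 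Since $ch^*_{(1+y)}([\mathcal{O}_B])=1\in H^0(B;\Q)$, the cap product collapses to scalar multiplication and we obtain $f_*{IT_y}_*(E)=I\chi_y(F)\cdot {T_y}_*(B)$.

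The algebra here is routine; the only points needing care are the index/sign bookkeeping --- matching the degree shift in $\LL_i$ (stalk $IH^{\dim F+i}(F;\Q)$) against the sign $(-1)^{\dim F+i}$ in the definition of $I\chi_y(f)$, so that in the simply connected case the sum telescopes to the honest genus $I\chi_y(F)=\chi_y([IH^*(F;\Q)])$ --- and the justification, via rigidity, that a constant local system underlying an admissible variation of mixed Hodge structures is automatically a constant variation.
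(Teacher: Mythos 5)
Your proposal is correct and follows essentially the same route as the paper: apply the Meyer-type formula (\ref{WMc}) termwise to the sum in Proposition \ref{IH}, use linearity of $ch^*_{(1+y)}$ and of the cap product to pull the sum inside, and recognize the result as $I\chi_y(f)$ by definition, with the simply connected case handled via rigidity exactly as in the text. The extra care you take with the index shift $j=\dim F+i$ and the sign bookkeeping is a faithful expansion of what the paper leaves implicit.
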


The last assertion of the corollary follows since, under the trivial
monodromy assumption, we have that
$$ch^*_{(1+y)}(I\chi_y(f))=I\chi_y(F) \in H^0(B;\Q[y,y^{-1}])\:.$$
Similar
considerations apply to genera. This is a very special case of the
stratified multiplicative property studied in detail in \cite{CMS}
and summarized in Section \S \ref{SMP} above.

%%%%%%%%%%%%%%%%%%%%%%%%%%%%%%%%%%%%%%%%%%%%%%%%%%%%%%%%%%%%%%%%%%%%%%%%%%%%%%%%%%%%%%
%%%%%%%%%%%%%%%%%%%%%%%%%%%%%%%%%%%%%%%%%%%%%%%%%%%%%%%%%%%%%%%%%%%%%%%%%%%%%%%%%%%%%%

\subsection{Atiyah-Meyer formulae in intersection homology}
We conclude this report with a result from work in progress
(\cite{MS}) on the computation of twisted intersection homology
genera. The following theorem can be regarded as a Hodge-theoretic
analogue of the Banagl-Cappell-Shaneson formula (\cite{BCS}):

\begin{theorem}(\cite{MS})
Assume $i: Z \hookrightarrow M$ is the closed inclusion of an
irreducible (or pure-dimensional) algebraic subvariety into the
smooth algebraic manifold $M$, with $\LL$ a local system on $M$
underlying an admissible variation of mixed Hodge structures with
associated flat bundle $(\VV, \mathcal{F}^{\bullet})$. Then one has
the formula:
\begin{equation}{IT_y}_*(Z;i^*\LL)= i^*(ch^*_{(1+y)}(\chi_y(\VV)))\cap
{IT_y}_*(Z) = ch^*_{(1+y)}(i^*(\chi_y(\VV)))\cap
{IT_y}_*(Z).\end{equation}
\end{theorem}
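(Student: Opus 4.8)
The plan is to follow the proof of Theorem~\ref{Meyerc}, but since $Z$ is now allowed to be singular there is no explicit filtered de Rham description of the relevant Hodge module available, so the argument must be rerouted through a \emph{twisting} identity at the level of mixed Hodge modules together with the multiplicativity of $MHC_*$ under such twists.

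First I would establish, in $D^b\mh(Z)$, the identification
\[
IC'^H_Z(i^*\LL)\;\simeq\;IC'^H_Z\otimes i^*(\LL^H),
\]
where $i^*(\LL^H)$ is the pull-back of the smooth mixed Hodge module attached to $\LL$; its underlying object is the flat bundle $i^*\VV$ with filtration $i^*\mathcal{F}^{\bullet}$, which is well defined because $\VV$ is a bundle on the ambient manifold $M$. The point is that, $\LL$ being a local system on \emph{all} of $M$, the functor $(-)\otimes i^*(\LL^H)$ is exact on $\mh(Z)$ and, by the projection formula, commutes with $j_{!}$ and $j_{*}$ along the inclusion $j\colon U\hookrightarrow Z$ of the smooth locus, hence with the intermediate extension $j_{!*}$. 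Since on $U$ both $IC'^H_Z(i^*\LL)$ and $IC'^H_Z\otimes i^*(\LL^H)$ restrict to the smooth mixed Hodge module attached to the admissible variation $(i\circ j)^*\LL$, the two agree. This is the Hodge-theoretic refinement of the elementary fact that $IC_Z(i^*\LL)\simeq IC_Z\otimes i^*\LL$ for a local system defined on all of $Z$, and it is the step where the genuinely Hodge-theoretic input sits; I expect it to be the main technical obstacle.

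Next I would record the multiplicativity of the motivic Chern class transformation under such a twist. On a local embedding of $Z$ into a manifold one has, for every $\mathcal{M}\in D^b\mh(Z)$,
\[
\mathrm{gr}^F_p DR\,(\mathcal{M}\otimes i^*(\LL^H))\;\simeq\;\bigoplus_{a+b=p}\mathrm{gr}^F_a DR(\mathcal{M})\otimes_{\mathcal{O}_Z} i^*(\mathrm{gr}^F_b\VV),
\]
an honest direct sum because each $\mathrm{gr}^F_b\VV$ is locally free. Passing to Grothendieck groups, regrouping the powers of $(-y)$ as in the proof of Theorem~\ref{Meyerc}, and using $\mathrm{gr}^F_{-p}\VV=\mathrm{gr}^p_{\mathcal{F}}\VV$, this gives
\[
MHC_*([\mathcal{M}\otimes i^*(\LL^H)])\;=\;i^*\chi_y(\VV)\otimes MHC_*([\mathcal{M}])\quad\in\quad G_0(Z)\otimes\Z[y,y^{-1}],
\]
where $i^*\chi_y(\VV)\in K^0(Z)[y,y^{-1}]$ acts via the $K^0(Z)$-module structure of $G_0(Z)$ (for $Z=M$ and $\mathcal{M}=\Q^H_Z$ this recovers the computation $MHC_*([\LL^H])=\chi_y(\VV)\otimes\lambda_y(T^*_Z)$ from the proof of Theorem~\ref{Meyerc}). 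Taking $\mathcal{M}=IC'^H_Z$ and combining with the first step yields
\[
MHC_*([IC'^H_Z(i^*\LL)])\;=\;i^*\chi_y(\VV)\otimes MHC_*([IC'^H_Z]),
\]
the crucial feature being that the class $MHC_*([IC'^H_Z])$ need not be computed explicitly.

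Finally I would push this through $td_{(1+y)}$. The module property $td_*(\alpha\otimes\beta)=ch^*(\alpha)\cap td_*(\beta)$ of the Baum--Fulton--MacPherson transformation (extended $\Z[y,y^{-1}]$-linearly) gives $td_*(MHC_*([IC'^H_Z(i^*\LL)]))=i^*(ch^*(\chi_y(\VV)))\cap td_*(MHC_*([IC'^H_Z]))$, and it remains to keep track of the $(1+y)$-rescaling that turns $td_*$ into $td_{(1+y)}$ and $ch^*$ into $ch^*_{(1+y)}$. This is the same bookkeeping as at the end of the proof of Theorem~\ref{Meyerc}: on the degree $2(k+i)$ component of $td_*(MHC_*([IC'^H_Z]))$ the transformation $td_{(1+y)}$ multiplies by $(1+y)^{-(k+i)}$, one has $ch^*_{(1+y)}(\chi_y(\VV))_{2i}=(1+y)^i\,ch^*(\chi_y(\VV))_{2i}$, and the cap product $H^{2i}\cap H_{2(k+i)}\to H_{2k}$ forces the degrees so that these rescalings combine to exactly the factor $(1+y)^{-k}$ by which $td_{(1+y)}$ rescales $H_{2k}$ on the left. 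Hence
\[
{IT_y}_*(Z;i^*\LL)\;=\;i^*(ch^*_{(1+y)}(\chi_y(\VV)))\cap {IT_y}_*(Z),
\]
and the remaining equality in the statement is the naturality of the twisted Chern character under $i^*$. Thus, once the twisting identity of the first step is secured, the rest of the proof is a formal consequence of constructions already used in the nonsingular case.
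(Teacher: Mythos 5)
Your overall architecture agrees with the paper's up to the key middle step: the identification $IC'^H_Z(i^*\LL)\simeq IC'^H_Z\otimes i^*\LL^H$ is exactly the paper's starting point (your justification via exactness of the twist and its commutation with $j_{!*}$ is a reasonable elaboration of what the paper asserts in one line), the reduction to a general twisting formula $MHT_y([\mathcal{M}\otimes i^*\LL^H])=ch^*_{(1+y)}(i^*\chi_y(\VV))\cap MHT_y([\mathcal{M}])$ is the paper's formula (4.16), and your final $(1+y)$-bookkeeping is the same recalculation as at the end of Theorem \ref{Meyerc}. Where you diverge is in how the twisting formula is proved, and that is where the gap sits.

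You prove it by asserting, for an \emph{arbitrary} $\mathcal{M}\in D^b\mh(Z)$, the convolution identity
$gr^F_pDR(\mathcal{M}\otimes i^*\LL^H)\simeq\bigoplus_{a+b=p}gr^F_aDR(\mathcal{M})\otimes_{\mathcal{O}_Z}i^*(gr^F_b\VV)$.
This is precisely the crux of the theorem and you give no argument for it. It does not follow from the formal properties of mixed Hodge modules recalled in the paper: the tensor product in $\mh$ is defined via external product and restriction to the diagonal, and knowing that the resulting Hodge filtration is the convolution filtration (and that the associated graded splits as stated, which requires strictness) is a nontrivial statement about Saito's theory, essentially of the same depth as the formula you are trying to prove. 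The paper's proof is structured specifically to avoid needing this: it uses resolution of singularities to reduce to generators $[p_*(j_*\LL'^H)]$ of $K_0(\mh(Z))$, where $X$ is smooth, $D$ is a normal crossing divisor, and the filtered de Rham complex of $j_*\LL'^H$ is \emph{explicitly} the logarithmic de Rham complex of the Deligne extension (formula (\ref{nc3})); the multiplicativity under twisting is then checked only for these generators, via $Rj_*(\LL')\otimes p^*i^*\LL=Rj_*(\LL'\otimes j^*p^*i^*\LL)$ and the resulting factorization $\chi_y(\bar{\VV}'\otimes j^*p^*i^*\VV)=\chi_y(\bar{\VV}')\otimes\chi_y(p^*i^*\VV)$ of locally free classes, after which $MHT_y\circ p_*=p_*\circ MHT_y$ and the projection formula transport the identity back to $Z$. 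To repair your argument you would either need to supply a proof of the $gr^F_pDR$ convolution identity for general $\mathcal{M}$ twisted by a smooth module pulled back from an ambient manifold, or replace that step by the paper's reduction to explicitly computable generators.
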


\begin{proof}
One has for the underlying perverse sheaves the equality:
$$IC_Z(\LL)= IC_Z \otimes i^*\LL \quad \text{and, after shifting,}
\quad IC'_Z(\LL)= IC'_Z \otimes i^*\LL \:.$$
And this implies on the level of (shifted)
mixed Hodge modules that:
$$IC^H_Z(\LL)= IC^H_Z \otimes i^*\LL^H \quad \text{and
resp.} \quad IC'^H_Z(\LL)= IC'^H_Z \otimes i^*\LL^H \:.$$
So the stated
formula is a special case of the following more general result
for any $M \in K_0(\mh(Z))$:
\begin{equation}
MHT_y([M \otimes i^*\LL^H]) = ch^*_{(1+y)}(i^*(\chi_y(\VV))) \cap
MHT_y([M])\:. \end{equation}

By resolution of singularities, we see that the Grothendieck group
$K_0(\mh(Z))$ is generated by elements of the form
$[p_*(j_*\LL'^H)]$ with $p: X \to Z$ a proper algebraic map from a
smooth  algebraic manifold $X$, $j: U=X \setminus D
\hookrightarrow X$ the open inclusion of the complement of a normal
crossing divisor $D$ with smooth irreducible components, and $\LL'$
an admissible variation of mixed Hodge structures on $U$. But
$MHT_y$ commutes with proper pushdown, and $ch^*_{(1+y)}$ commutes
with pullbacks, so that by the projection formula it is enough to
show that:
\begin{equation}\label{sts} MHT_y([j_*\LL'^H \otimes p^*i^*\LL^H])=
ch^*_{(1+y)}(p^*i^*(\chi_y(\VV))) \cap
MHT_y([j_*\LL'^H]).\end{equation}

At this point we can use the identity of formula (\ref{nc3}),
already discussed in Remark \ref{nonc}:
\begin{equation}\label{ext}
td_*\left(MHC_*([j_*\LL'^H])\right) = ch^*(\chi_y(\bar{\VV}')) \cup
ch^*(\lambda_y(\Omega^1_{X}(\log D))) \cap td_*(X) \:,
\end{equation}
with $td_*(X)=td^*(T_X)\cap [X]$, and $\bar{\VV}'$ the Hodge bundle of
the Deligne extension of $\LL'$ to $(X,D)$. Moreover, we also have
that:
$$Rj_*(\LL') \otimes p^*i^*\LL = Rj_*(\LL'\otimes j^*p^*i^*\LL)$$
and similarly on the level of (shifted) mixed Hodge modules, so that
$$ch^*(\chi_y(\bar{\VV}' \otimes j^*p^*i^*\VV))=
ch^*(\chi_y(\bar{\VV}')) \cup ch^*(\chi_y(p^*i^*\VV)).$$ From here
the stated formula follows (as in Theorem \ref{Meyerc}) by the usual
recalculation in terms of $ch^*_{(1+y)}$.
\end{proof}

\begin{remark}\rm The formula (\ref{ext}) can be also be used for
showing the following important facts (cf. \cite{MS}):
\begin{enumerate}
\item The motivic Hirzebruch transformation $MHT_y$ commutes with exterior products.
\item The limit ${IT_y}_*(Z;\LL)$ for $y=-1$ always exists,
as well as ${IT_y}_*(Z;\LL)$ for $y=0$, if $\LL$ is of non-negative weight,
e.g. $\LL=\Q_Z$.
\item More generally the limit ${MHT_y}([M])$ for $y=-1$ always exists
for any mixed Hodge module $M$ on $Z$, with
$${MHT_{-1}}([M])=c_*([rat(M)])\otimes\Q$$
the rationalized MacPherson-Chern class of the underlying perverse sheaf,
i.e. of the corresponding constructible function given by the
Euler characteristics of the stalks.
\end{enumerate} \end{remark}

\bibliographystyle{amsalpha}

\begin{thebibliography}{A}




\bibitem[At]{At} M. F. Atiyah, \textit{The signature of fiber bundles}, in
\emph{Global Analysis (Papers in Honor of K. Kodaira)}, 73--84,
Univ. Tokyo Press, Tokyo, 1969.

\bibitem[BCS]{BCS} M. Banagl, S.E. Cappell, J.L. Shaneson, \textit{Computing twisted signatures and $L$-classes of stratified spaces},  Math. Ann.  \textbf{326} (2003), 589--623.

\bibitem[Ba]{Ba} M. Banagl, \textit{Topological Invariants of Stratified
Spaces}, Springer Monographs in Mathematics, Springer Verlag
Berlin-Heidelberg 2007.

\bibitem[BB]{BB} G. Barthel, J.-P. Brasselet, K.-H. Fieseler, O. Gabber, L. Kaup,
\textit{Rel\`evement de cycles alg\'ebriques et homomorphismes
associ\'es en homologie d'intersection.}  Ann. of Math. (2)  \textbf{141}
(1995),  no. 1, 147--179.

\bibitem[BFM]{BFM} P. Baum, W. Fulton, R. MacPherson, \textit{Riemann-Roch for singular varieties}, Publ. Math. I.H.E.S. \textbf{45} (1975), 101--145.

\bibitem[BBD]{BBD}  A.A. Beilinson,  J. Bernstein, P. Deligne,  \textit{Faisceaux pervers}, Ast\'{e}risque \textbf{100} (1982).

\bibitem[BL]{BL} J. Bernstein, V. Lunts,  \textit{Equivariant Sheaves and Functors}, Lecture Notes in Math. 1578, Springer, 1994.


\bibitem[Bo]{B} A. Borel et. al. , \textit{Intersection cohomology}, Progress in Mathematics, vol. 50, Birkh\"auser Boston, Boston, MA, 1984.


\bibitem[BSY]{BSY} J.P. Brasselet, J. Sch\"urmann, S. Yokura, \textit{Hirzebruch classes and motivic Chern classes of singular
spaces}, arXiv:math.AG/0503492.


\bibitem[CS91]{CS2} S.E. Cappell, J.L. Shaneson,  \textit{Stratifiable maps and topological invariants},  J. Amer. Math. Soc.  \textbf{4}  (1991),  no. 3, 521--551.

\bibitem[CS94]{CS} S.E. Cappell, J.L. Shaneson, \textit{Genera of algebraic varieties and counting of lattice points},  Bull. Amer. Math. Soc. (N.S.)  \textbf{30}  (1994),  no. 1, 62--69.



\bibitem[CMSa]{CMS0} S.E. Cappell, L.G. Maxim, J.L. Shaneson,
\textit{Euler characteristics of algebraic varieties},
Comm. Pure Appl. Math. \textbf{61} (2008), no. 3, 409--421.


\bibitem[CMSb]{CMS} S.E. Cappell, L.G. Maxim, J.L. Shaneson,
\textit{Hodge genera of algebraic varieties, I}, 
Comm. Pure Appl. Math. \textbf{61} (2008), no. 3, 422--449.

\bibitem[CLMSa]{CLMS} S.E. Cappell, A. Libgober, L.G. Maxim, J.L. Shaneson,
\textit{Hodge genera of algebraic varieties, II},  arXiv:math.AG/0702380.

\bibitem[CLMSb]{CLMS2} S.E. Cappell, A. Libgober, L.G. Maxim, J.L. Shaneson,
\textit{Hodge genera and characteristic classes of complex algebraic
varieties}, Electron. Res. Announc. Math. Sci. \textbf{15} (2008), 1--7.


\bibitem[CMSS]{MS} S.E. Cappell, L.G. Maxim, J. Sch\"urmann, J.L. Shaneson, \textit{Atiyah-Meyer formulae
for intersection homology genera}, work in progress.


\bibitem[CHS]{CHS} S.S. Chern, F. Hirzebruch, J.-P. Serre, \textit{On the index of a fibered
manifold},  Proc. Amer. Math. Soc.  \textbf{8}  (1957), 587--596.

\bibitem[De]{De} P. Deligne, \textit{Equation diff\'erentielles a point
singular r\'egulier}, Springer 1969.

\bibitem[F]{F} W. Fulton, \textit{Intersection Theory}, Springer-Verlag, 1981.

\bibitem[GM]{GM2} M. Goresky, R. MacPherson, \textit{Intersection Homology II}, Invent. Math., \textbf{71} (1983), 77--129.

\bibitem[HS]{HS} M. Hanamura, M. Saito, \textit{Weight filtration on the cohomology of algebraic varieties},  mathAG/0605603.

\bibitem[H66]{H} F. Hirzebruch,  \textit{Topological methods in algebraic geometry}, Springer, 1966.

\bibitem[H69]{H2} F. Hirzebruch, \textit{The signature of ramified coverings}, Collected Math.
Papers in Honor of Kodaira, Tokyo University Press, Tokyo, 1969, pp.
253--265.

\bibitem[Ka]{K} M. Kashiwara, \textit{A study of a variation of mixed Hodge structures},  Publ. Res. Inst. Math. Sci.  \textbf{22}  (1986),  no. 5, 991--1024.

\bibitem[KS]{KS} M. Kashiwara, P. Schapira, \textit{Sheaves on Manifolds},
Springer-Verlag, Berlin, Heidelberg, 1990.

\bibitem[Ke]{Ke} G. Kennedy, \textit{MacPherson's Chern classes of singular varieties},
Comm. in Algebra \textbf{18} (1990), 2821--2839.

\bibitem[Ko]{Ko} K. Kodaira, \textit{A certain type of irregular algebraic surfaces},  J. Analyse Math.  \textbf{19}  (1967), 207--215.

\bibitem[M]{M} R. MacPherson, \textit{Chern classes for singular algebraic varieties},
Ann. of Math. (2)  \textbf{100}  (1974), 423--432.


\bibitem[Me]{Me} W. Meyer, \textit{Die Signatur von lokalen Koeffizientensystemen
und Faserb\"undeln}, Bonner Mathematische Schriften 53,
(Universit\"at Bonn), 1972.


\bibitem[Sa88]{Sa0} M. Saito, \textit{Modules de Hodge polarisables},  Publ. Res. Inst. Math. Sci.  \textbf{24}  (1988),  no. 6, 849--995.

\bibitem[Sa89]{Sa2} M. Saito, \textit{Introduction to mixed Hodge modules}, Actes du Colloque de Th\'{e}orie de Hodge (Luminy, 1987), Ast\'{e}risque No. 179--180 (1989), 10, 145--162.

\bibitem[Sa90]{Sa1} M. Saito, \textit{Mixed Hodge Modules},  Publ. Res. Inst. Math. Sci. \textbf{26}  (1990),  no. 2, 221--333.




\bibitem[Sa00]{Sa5} M. Saito, \textit{Mixed Hodge complexes on algebraic varieties},  Math. Ann.  \textbf{316}  (2000),  no. 2, 283--331.


\bibitem[Sc]{Sc} J. Sch\"urmann, \textit{Topology of singular spaces and constructible sheaves}, Monografie
Matematyczne, 63. Birkh\"auser Verlag, Basel, 2003.

\bibitem[SY]{SY} Sch\"urmann, J., Yokura, S., \textit{A survey of characteristic classes of singular spaces},
in ``Singularity Theory" (ed. by D. Ch\'eniot et al), Dedicated to
Jean Paul Brasselet on his $60$th birthday, Proceedings of the 2005
Marseille Singularity School and Conference, World Scientific, 2007,
865--952.

\bibitem[Sh]{S} J.L. Shaneson, \textit{Characteristic classes, lattice points and Euler-MacLaurin formulae},
Proceedings ICM, Zurich, Switzerland 1994.


\bibitem[Si]{Si} P.H. Siegel, \textit{Witt spaces: A geometric cycle theory for $KO$-homology at odd primes},
Amer. J. Math. \textbf{105} (1983), 1067�-1105.

\bibitem[SZ]{SZ} J. Steenbrink, S. Zucker, \textit{Variations of mixed Hodge
structures},  Invent. Math.  \textbf{80}  (1985),  no. 3, 489--542.

\bibitem[To]{To} Totaro, B., \textit{Chern numbers for singular varieties and elliptic homology},
Ann. of Math. (2)  \textbf{151}  (2000),  no. 2, 757--791.


\bibitem[Y]{Y} S. Yokura, \textit{A singular Riemann-Roch for Hirzebruch characteristics},
Banach Center Publications, Vol. 44, Warsaw 1998, pp. 257--268.

\bibitem[W]{W} A. Weber, \textit{A morphism of intersection homology induced by an
algebraic map.}  Proc. Amer. Math. Soc.  \textbf{127}  (1999),  no. 12,
3513--3516.


\end{thebibliography}

\end{document}